\numberwithin{equation}{section}
\def\@captionheadfont{}
\newcommand{\TT}{\mathcal{T}} 
\newcommand{\Z}{\mathbb{Z}}
\newcommand{\R}{\mathbb{R}}
\newcommand{\ol}{\overline}
\newcommand{\sm}{\setminus}
\newcommand{\wt}{\widetilde}
\newcommand{\imra}{\looparrowright}
\DeclareMathOperator{\pt}{pt}
\theoremstyle{plain}
\newtheorem{theorem}{Theorem}[section]
	\newtheorem{proposition}[theorem]{Proposition}
	\newtheorem{lemma}[theorem]{Lemma}
		\newtheorem{theoremalpha}{Theorem}
\newtheorem*{rep@theorem}{\rep@title}
\newcommand{\newreptheorem}[2]{%
\newenvironment{rep#1}[1]{%
 \def\rep@title{#2 \ref{##1}}%
 \begin{rep@theorem}}%
 {\end{rep@theorem}}}
\theoremstyle{definition}
	\newtheorem{definition}[theorem]{Definition}
	\newtheorem{remark}[theorem]{Remark}
\newtheoremstyle{theorem-giventitle}
        {}{}              
        {\itshape}                      
        {}                              
        {\bfseries}                     
        {.}                             
        { }                             
        {\thmnote{\bfseries#3}}
\theoremstyle{theorem-giventitle}
\newtheorem{theorem-named}{}
\begin{document}

\title[The disc embedding theorem and dual spheres]{The $4$-dimensional disc embedding theorem and dual spheres}

\author{Mark Powell}
\address{School of Mathematics and Statistics, University of Glasgow, United Kingdom}
\email{mark.powell@glasgow.ac.uk}

\author{Arunima Ray}
\address{Max-Planck-Institut f\"{u}r Mathematik, Vivatsgasse 7, 53111 Bonn, Germany}
\email{aruray@mpim-bonn.mpg.de }

\author{Peter Teichner}
\address{Max-Planck-Institut f\"{u}r Mathematik, Vivatsgasse 7, 53111 Bonn, Germany}
\email{teichner@mac.com }

\def\subjclassname{\textup{2020} Mathematics Subject Classification}
\expandafter\let\csname subjclassname@1991\endcsname=\subjclassname
\subjclass{
57K40, 
57N35. 
}

\begin{abstract}
We modify the proof of the disc embedding theorem for $4$-manifolds, which appeared as Theorem~5.1A in the book ``Topology of 4-manifolds'' by Freedman and Quinn, in order to construct geometrically dual spheres. These were claimed in the statement but not constructed in the proof.
We also prove Proposition 1.6 from the Freedman-Quinn book regarding generic homotopies of discs or spheres in a 4-manifolds, which was not proven there.
\end{abstract}
\maketitle

\section{Introduction}

The \emph{disc embedding theorem}~\cite{Freedman:1984-1} combines work of Casson, Freedman, and Quinn. In a topological $4$-manifold with \emph{good} fundamental group, the theorem replaces an immersed disc with embedded boundary and a framed algebraically dual sphere by a locally flat embedded disc with the same boundary as the original disc. Consequences include topological $4$-dimensional surgery theory and the topological $5$-dimensional $s$-cobordism theorem, both for good fundamental groups.

Freedman's original proof was restricted to the simply connected case and used \emph{Casson handles}, built out of layers of (thickened) immersed discs. The full disc embedding theorem was first stated in the book ``Topology of $4$-manifolds'' by Freedman and Quinn~\cite{FQ}*{Theorem~5.1A}. This book expanded on Freedman's argument by using a generalisation of Casson handles built out of capped gropes, variously called a skyscraper~\cite{Freedman-notes}, a cope~\cite{Freedman:1984-1}, or a generalized infinite tower~\cite{FQ}.  The Freedman-Quinn capped grope approach is key to the proof of the disc embedding theorem for nontrivial fundamental groups.

The goal of this article is to modify part of the Freedman-Quinn proof of the disc embedding theorem, in order to fill a gap in the proofs of~\cite{FQ}*{Theorem~5.1A~and~Corollary~5.1B} related to geometrically dual spheres. Briefly, one needs algebraically dual spheres for the input of the disc embedding theorem, while for many applications one needs geometrically dual spheres in the output.  Here we say that two surfaces are \emph{geometrically dual} if they intersect transversely in precisely one point. The proof in~\cite{FQ} produces algebraically dual spheres in the output, rather than geometrically dual spheres.

Part II of Freedman-Quinn~\cite{FQ} is the principal reference for the tools required to do any nontrivial work with topological 4-manifolds. These include the annulus theorem~\cite{FQ}*{Theorem~8.1A},  smoothing away from a point~\cite{FQ}*{Theorem~8.2}, transversality~\cite{FQ}*{Theorem~9.5A}, and the existence and uniqueness of normal bundles for locally flat submanifolds~\cite{FQ}*{Section~9.3}.  Every theorem in Chapters 7, 8, and 9 of~\cite{FQ} relies on Theorem~5.1A, including the latter's assertion of geometrically dual spheres.  In turn the classification results of Chapters 10 and 11, and indeed all classification results proven since then, make essential use of these tools.
Thus it is important to have a complete proof of Theorem 5.1A of~\cite{FQ}, from the point of view of both the rest of that book and of much of the rest of the literature on topological 4-manifolds.

In the spirit of addressing foundational omissions from~\cite{FQ}, we also prove \cite{FQ}*{Proposition~1.6} on homotopies between immersed surfaces in a 4-manifold. This states that a homotopy between generic immersions of a surface in a 4-manifold is homotopic to a composition of homotopies, each of which is a regular homotopy or a cusp homotopy in some ball, or the inverse of a cusp homotopy. This important proposition was stated but not proven in the Freedman-Quinn book. We will discuss the details in Section~\ref{subsec:generic-immersions}.

One might wonder whether our concern about the existence of geometrically dual spheres is valid, and whether, for example, the algebraically dual spheres produced in the proof in~\cite{FQ} might suffice for the above mentioned applications. However, surgery on an embedded, framed sphere without a geometrically dual sphere might change the fundamental group of the ambient $4$-manifold, whereas geometrically dual spheres guarantee control over the fundamental group. Therefore, many applications hinge on providing geometric duals together with the embedded spheres. We also note that a $2$-sphere in a $4$-manifold with an algebraically dual sphere need not admit a geometrically dual sphere, a phenomenon not seen in higher dimensions due to the Whitney trick.  In dimension four, geometric duals cannot be found \textit{post hoc}. An example of a sphere in $S^2\times S^2$ with an algebraically dual sphere but no geometrically dual sphere, was produced in \citelist{\cite{Sato91}*{Section~3}\cite{Sato89}*{Example~4.1}} cf.~\cite{klugmiller2019}*{Figure~9}. In the construction one begins with a $2$-knot $\Sigma\subseteq S^4$ and performs surgery on a simple closed curve in $S^4\smallsetminus \Sigma$ which is homologous to the meridian. The result is an embedded sphere $S$ in $S^2\times S^2$. By a judicious choice of $\Sigma$, one may ensure that $\pi_1((S^2\times S^2)\smallsetminus S)$ is a nontrivial perfect group, implying that $S$ does not admit a geometrically dual sphere.

\subsection{The disc embedding theorem}

The \emph{equivariant intersection form} of a connected 4-manifold $M$  is a pairing
\[\lambda \colon H_2(M,\partial M; \Z[\pi_1 M]) \times H_2(M; \Z[\pi_1 M]) \longrightarrow \Z[\pi_1 M].
\]
As well as having nonempty boundary, the topological $4$-manifold $M$ may be nonorientable or noncompact. For topological 4-manifolds, we will explain the \emph{reduced self-intersection number}
\[
\wt{\mu} \colon \pi_2(M) \longrightarrow \Z[\pi_1 M]/ \langle g - w(g)g^{-1},\Z\cdot 1 \rangle
\]
in \cref{subsec:generic-immersions}.
If $M$ is orientable, the quantity $\wt\mu$ is determined by $\lambda$. In this case, the assumptions~$\wt\mu(g_i)=0$ below are implied by $\lambda(g_i,g_i)=0$.

Given maps $\{f_i\}_{i=1}^k$ of discs or spheres to $M$, a second collection $\{g_i \colon S^2\to M\}_{i=1}^k$ is \emph{algebraically dual} to the $\{f_i\}_{i=1}^k$ if the algebraic intersection form satisfies $\lambda(f_i,g_j)=\delta_{ij}$.
If this is true geometrically,  we say that the collection $\{g_i\}$ is \emph{geometrically dual} to the $\{f_i\}$.
More precisely, this means that $f_i\cap g_j$ is a single, transverse point for $i=j$ and is empty for $i \neq j$. An intersection point between $f_i$ and $g_i$ is said to be \emph{transverse} if there are  local coordinates $\R^4 \hookrightarrow M$ in which $f_i$ and $g_i$ are linear.
Here, and throughout the paper, we conflate the maps $f_i$, $g_i$ with their images $f_i(S^2)$/$f_i(D^2)$ and $g_i(S^2)$, as well as with the corresponding homology/homotopy classes.

We remark that~\cite{FQ} uses the terminology of `algebraically transverse spheres' which by definition have trivial normal bundles. We make the condition on normal bundles more explicit in our nomenclature.
Our algebraically dual spheres need not have trivial normal bundles. When the latter condition is required, we state it explicitly.

\begin{theoremalpha}[Disc embedding theorem cf.\ \cite{FQ}*{Theorem~5.1A}]\label{thm:FQ51A-intro}
Let $M$ be a connected $4$-manifold with good fundamental group. Consider a continuous map
\[
F=(f_1,\dots,f_k) \colon (D^2\sqcup \cdots \sqcup D^2,S^1 \sqcup \cdots \sqcup S^1) \longrightarrow (M, \partial M)
\]
that is a locally flat embedding on the boundary and that admits algebraically dual spheres $\{g_i\}_{i=1}^k$ satisfying $\lambda(g_i,g_j)=0= \wt{\mu}(g_i)$ for all $i,j$.
Then there exists a locally flat embedding
\[
\ol{F}=(\ol{f}_1,\dots,\ol{f}_k) \colon (D^2\sqcup \cdots \sqcup D^2,S^1 \sqcup \cdots \sqcup S^1) \hookrightarrow (M,\partial M)
\]
such that $\ol{F}$ has the same boundary as $F$ and admits a generically immersed, geometrically dual collection of framed spheres~$\{\ol g_i\}_{i=1}^k$, such that $\ol{g}_i$ is homotopic to $g_i$ for each~$i$.

Moreover, if $f_i$ is a generic immersion, then it induces a framing of the normal bundle of its boundary circle. The embedding $\ol{f}_i$ may then be assumed to induce the same framing.
\end{theoremalpha}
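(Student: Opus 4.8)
The plan is to treat Theorem~\ref{thm:FQ51A-intro} as a black box producing a locally flat embedded collection $\ol F$ with geometrically dual framed spheres $\{\ol g_i\}$, and to reduce the stated duality and homotopy claims to careful bookkeeping inside the Freedman--Quinn construction of the generalized infinite tower (skyscraper). The real content is to verify that the dual spheres constructed stage-by-stage in that construction (a) remain framed, (b) meet the output discs geometrically as claimed, (c) are pairwise disjoint from each other and from $\ol f_i$ for $i\neq j$, and (d) are homotopic, rather than merely homologous, to the input spheres $g_i$. Accordingly I would organise the argument as: first set up notation matching the input data $F$, $\{g_i\}$ to the towers; then track the dual spheres through the decomposition; then handle the framing statement; and finally address the homotopy claim.

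First I would recall that the input discs $f_i$, after the standard preliminary manipulations (tubing into the algebraically dual spheres $g_i$, using $\lambda(g_i,g_j)=0=\wt\mu(g_i)$ to make the $g_i$ framed and mutually disjoint and disjoint from the other $f_j$), can be replaced by a collection on which one builds disjointly embedded capped gropes, and then the convergent Freedman--Quinn tower. The crucial observation is that each dual sphere $g_i$ can be arranged to intersect the corresponding grope/tower in a single point lying in the bottom stage, away from all the caps and the region where the infinite construction takes place. Since the homeomorphism furnished by Freedman's theorem is supported in (a neighbourhood of) the tower and is the identity near this intersection point, the image $\ol f_i$ of the core disc meets $g_i$ in exactly that one point, with the required linear local model. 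Disjointness of $\ol f_j$ from $g_i$ for $j\neq i$ follows because the whole $j$-th tower is disjoint from the $i$-th tower, hence from $g_i$.

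Next, the framing assertions. For the dual spheres: because the tubing and finger-move manipulations are done so as to preserve the framings of the $g_i$ (using the Arf/framing obstruction vanishing packaged into $\wt\mu(g_i)=0$), and because Freedman's homeomorphism respects normal framings, the resulting $\ol g_i$ are framed. For the boundary-framing moreover-clause: if $f_i$ is a generic immersion it has a well-defined normal framing on $S^1=\partial D^2$, and the entire skyscraper construction takes place in the interior of a tubular neighbourhood of the original immersed disc, fixing a collar of the boundary; since $\ol f_i$ agrees with $f_i$ near the boundary and the homeomorphism is the identity there, $\ol f_i$ induces the same boundary framing. This is essentially a ``rel boundary'' remark about where the construction is supported.

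The main obstacle I expect is upgrading ``homologous'' to ``homotopic'' for the dual spheres $\ol g_i \simeq g_i$. Since $\ol g_i$ is literally obtained from $g_i$ by a sequence of finger moves, tubings into other spheres of the collection (which are null-homotopic modifications when done correctly), Whitney moves, and an ambient homeomorphism isotopic to the identity on the relevant neighbourhood, one should get an honest homotopy. The delicate point is controlling the effect of tubing $g_i$ into copies of the other $g_j$ or into low-stage surface stages of the tower: each such tube changes $g_i$ by a connect-sum with a sphere that is itself null-homotopic in $M$ (because the $g_j$ and the grope surface stages bound, in the relevant sense, inside the structures built from the $f$'s), so the homotopy class is unchanged. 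I would make this precise by arguing that every modification applied to $g_i$ is realised by a homotopy in $M$, invoking that the ambient homeomorphism constructed by Freedman is topologically isotopic to the identity on the tower neighbourhood rel its frontier, and that finger moves and Whitney moves are regular homotopies. Assembling these, $\ol g_i$ is homotopic to $g_i$ as desired.
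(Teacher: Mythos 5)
There is a genuine gap, and it sits exactly at the point your proposal waves through. You assert that ``each dual sphere $g_i$ can be arranged to intersect the corresponding grope/tower in a single point lying in the bottom stage, away from all the caps and the region where the infinite construction takes place.'' This is precisely the unjustified step in \cite[Proposition~3.3~and~pp.~86--7]{FQ} that this paper is written to repair: the tower caps of the $1$-storey capped tower come from null homotopies of the double point loops of the grope caps (this is where the good-group hypothesis enters), and those null homotopies cannot be controlled --- their tracks may hit the dual spheres $\{g_i\}$ arbitrarily. So after the cap stage is attached one only retains \emph{algebraically} dual spheres, and bookkeeping inside the Freedman--Quinn construction cannot recover geometric duality. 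The paper's fix is a new geometric ingredient your proposal lacks: Clifford tori at the double points, capped by meridional discs tubed into the $\{g_i\}$, serve as a reusable ``dual sphere factory.'' Because the Clifford tori sit in a collar of the boundary of $M' = M \sm \bigcup \nu(f_i)$ (they are close to the double points), the uncontrolled null homotopies can be pushed off their bodies, and contracting a fresh parallel copy produces a new geometrically dual sphere whenever one is consumed. Without this (or an equivalent device) the claim that the duals survive to the end of the construction is exactly the assertion to be proved, not a consequence of it.

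A second, related problem is your treatment of the homotopy claim $\ol g_i \simeq g_i$. You say that tubing $g_i$ into copies of the other $g_j$ is a ``null-homotopic modification when done correctly,'' but tubing into $g_j$ changes the homotopy class by a multiple of $[g_j]\in\pi_2(M)$, which is in general nonzero. In the paper the spheres one tubes $g_i$ into are the duals $R_j$ of the capped towers, and these are null homotopic in $M$ for a specific reason (Lemma~\ref{lem:null-homotopic}): they arise by contracting capped Clifford tori, the contraction is independent of the choice of caps rel boundary (Lemma~\ref{lem:surgery-indep-caps}), and contracting along the meridional caps yields a sphere contained in a small ball around the double point. So the homotopy statement also depends essentially on the Clifford-torus construction, and cannot be obtained by tubing into the original $g_j$. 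Finally, note that the paper does not build the towers on the $f_i$ themselves: it builds them on the Whitney discs for the intersections of the $\{f_i\}$, inside $M'$, and recovers $\ol F$ by Whitney moves over the resulting embedded discs; your framing and ``rel boundary'' remarks are fine in spirit but are attached to the wrong objects.
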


\begin{remark}
Our statement of \cref{thm:FQ51A-intro} differs from~\cite{FQ}*{Theorem~5.1A} in that we emphasise that the input is purely homotopy theoretic, and we control the homotopy classes of the dual spheres. The interested reader can use the discussion in \cref{remark:normal-bundles} to see that even the original continuous maps $f_i$ induce framings modulo $2$ on the boundary circles.
\end{remark}

\begin{remark}
The geometrically dual spheres in the conclusion of the disc embedding theorem imply that inclusion induces an isomorphism $\pi_1(M) \cong \pi_1(M \sm \bigcup_i \ol{f}_i)$.
As in \cite{FQ}, we refer to a collection of immersed surfaces whose removal does not change the fundamental group as \emph{$\pi_1$-negligible}.
\end{remark}

We discuss topological immersions in \cref{subsec:generic-immersions},
where we also explain why a topological generic immersion $f$ admits a linear normal bundle $\nu(f)$ whose total space is embedded in~$M$ apart from finitely many plumbings. If the base of $\nu(f)$ is a disc, and so in particular contractible, then this bundle has a unique trivialisation, or framing, which is used in the last paragraph of the statement of \cref{thm:FQ51A-intro}.

We recall the notion of a \emph{good} group, from \cites{Freedman-Teichner:1995-1,DET-book-goodgroups}, in  \cref{def:good-group}.  For applications, it suffices to know that the class of good groups is known to contain groups of subexponential growth~\cites{Freedman-Teichner:1995-1, Krushkal-Quinn:2000-1}, and to be closed under subgroups, quotients, extensions, and colimits~\cite{FQ}*{p.\ 44}. In particular, all finite groups and all solvable groups are good.
It is not known whether non-abelian free groups are good.

The proof of the disc embedding theorem from~\cite{FQ} begins with immersed discs and has three distinct steps. We describe the $k=1$ case for ease of exposition. First use~\cite{FQ}*{pp.~86-7, Proposition~2.9 and Lemma~3.3} to upgrade the immersed disc $f$ to a \emph{$1$-storey capped tower}~$\mathcal{T}$ with at least four surface stages, whose attaching region coincides with the framed boundary of $f$. Then use~\cite{FQ}*{Proposition~3.8} to show that every $1$-storey capped tower with at least four surface stages contains a skyscraper with the same attaching region. Finally, using \emph{decomposition space theory}, \cite{FQ}*{Theorem 4.1} shows that every skyscraper is homeomorphic to a handle $D^2\times D^2$ relative to its attaching region.
The proof given in~\cite{FQ}*{pp.~86-7} does not mention geometrically dual spheres, although they appear in the theorem statement.  However, the claimed geometrically dual spheres in the output are used multiple times in~\cite{FQ}, as we indicate in \cref{sec:compendium}.

In this article we give a modified proof of the disc embedding theorem, including the construction of the claimed geometrically dual spheres. Specifically, we only modify the first step of the proof, producing $1$-storey capped towers equipped with geometrically dual spheres. Our modification utilises dual \emph{capped surfaces} obtained from Clifford tori. Such capped surfaces allow us to create arbitrarily many collections of pairwise disjoint geometrically dual spheres. The key insight in our argument is that the Clifford tori are close to the corresponding double points, which allows us to ensure that the tracks of certain null homotopies do not intersect them. This small amount of extra disjointness is just enough to make the argument work. In \cref{rem:referee-arguments} we also explain an alternative argument suggested by a referee.

\begin{remark}\label{remark:for-experts}
For experts, we explain the problem with the proof in \cite{FQ} in more detail. That proof begins with immersed discs with geometrically transverse spheres and upgrades the former to capped gropes with arbitrarily many surface stages, still equipped with geometrically transverse spheres $\{g_i\}$. The key step is finding a further stage of caps, constructing a $1$-storey capped tower. The tower caps come from null homotopies for the double point loops in the caps of the capped grope, which exist due to the assumption on fundamental groups. The null homotopies cannot be easily controlled, and therefore the tower caps may intersect the dual spheres $\{g_i\}$, as well as the lower stages of the tower, arbitrarily. The final step in the proof of~\cite{FQ}*{Lemma~3.3} is to push intersections of the tower caps with the grope caps or surfaces stages down to the base stage and tube into the spheres $\{g_i\}$. One sees then that the intersections between $\{g_i\}$ and the tower caps have not been controlled. These could be pushed down into the surface stages, but in that case the $1$-storey capped towers would still only have algebraically dual spheres.

It seems to have been tacitly assumed in \cite{FQ}*{Proposition~3.3~and~pp.~86--7} that \cite{FQ}*{Lemma~3.3} provides geometrically dual spheres.  In fact, the first paragraph of the proof on page 86 does not mention how to construct transverse spheres, and Lemma 3.3 does not claim to provide them.  So \cite{FQ} gives a correct proof of Theorem 5.1A without the last four words `and with transverse spheres' (known as geometrically dual spheres in our terminology).  On the other hand, as discussed above, it is assumed throughout Part II of \cite{FQ} that Theorem 5.1A constructs geometrically dual spheres.
\end{remark}

\subsection{Outline of the proof}
For experts, here is an outline of our argument -- more details can be found in \cref{sec:technical-lemma,sec:proofs}. The beginning follows that of~\cite{FQ}*{Theorem~5.1A}: assume the collections $\{f_i\}$ and $\{g_i\}$ are generically immersed, and after cusp homotopies that the intersections and self-intersections of $\{g_i\}$ are algebraically cancelling. Tube the $\{f_i\}$ into the $\{g_i\}$ to ensure that the intersections and self-intersections of $\{f_i\}$ are algebraically cancelling, and further arrange that $\{f_i\}$ and $\{g_i\}$ are geometrically dual by the geometric Casson lemma (\cref{lem:geometric-casson-lemma}).

Let $\{D_j\}$ denote framed, immersed Whitney discs pairing the intersections within and among the $\{f_i\}$. Clifford tori at the double points of $\{f_i\}$ are geometrically dual to $\{D_j\}$. These tori can be capped by meridional discs for $\{f_i\}$, tubed into $\{g_i\}$ so that they lie in the complement of $\{f_i\}$, so we have dual capped tori $\{T_j\}$ to the $\{D_j\}$.

Now comes our modification. Tube the intersections among $\{D_j\}$ into the $\{T_j\}$ to produce capped surfaces $\{D_j'\}$ with the same framed boundary. The caps of $\{D'_j\}$ and of the $\{T_j\}$ can be separated as follows. The intersections between the two sets of caps are paired by framed, immersed Whitney discs. These may be assumed to be disjoint from the Clifford tori since the tori are located in a tubular neighbourhood of $\{f_i\}$, and they can be made disjoint from the bodies of $\{D_j'\}$ by pushing down and tubing into geometrically dual spheres for $\{D'_j\}$ produced by contracting parallel copies of the $\{T_j\}$. Now the geometric Casson lemma, applied to these new Whitney discs, separates the two families of caps, without creating any new undesirable intersections. Next we similarly upgrade the caps of $\{D'_j\}$ to capped surfaces, with caps disjoint from those of the $\{T_j\}$. To do so, push the cap intersections for $\{D'_j\}$ down and tube into $\{T_j\}$ to obtain height two capped gropes pairing the intersections among $\{f_i\}$. Then separate the caps of these height two capped gropes from the caps of the $\{T_j\}$ using the separation argument above, so that the final height two capped gropes are geometrically dual to the $\{T_j\}$.

The remainder of the proof is standard. By grope height raising and the good group hypothesis replace the height two capped gropes by height four capped gropes with null-homotopic double point loops and the same framed boundary. Let $\{C_\ell\}$ denote immersed discs bounded by these double point loops, arising from the null homotopies and with the appropriate boundary framing. We may assume that $\{C_\ell\}$ is disjoint from the Clifford tori (not their caps), since these lie in a tubular neighbourhood of $\{f_i\}$. For any intersections of $\{C_\ell\}$ with the rest of the height four gropes, push down and tube into geometrically dual spheres produced by contracting parallel copies of $\{T_j\}$. Now the height four capped gropes can be equipped with $\{C_\ell\}$ as tower caps to produce 1-storey capped towers $\{\TT^c_j\}$. Finally, the $\{T_j\}$ are contracted to spheres $\{R_j\}$ which are geometrically dual to $\{\TT^c_j\}$. By tubing $\{g_i\}$ into $\{R_j\}$, we acquire spheres $\{\ol{g}_i\}$ that are geometrically dual to $\{f_i\}$ and do not intersect $\{\TT^c_j\}$. Freedman and Quinn~\cite{FQ}*{Chapters~3~and~4} (see also \cite{Freedman-notes}*{Parts~II~and~IV}) showed that $1$-storey capped towers with at least four surface stages contain embedded topological discs with the same framed boundary. These can be used to perform the Whitney move on $\{f_i\}$ to produce the desired embeddings $\{\ol{f}_i\}$ with geometric duals $\{\ol{g}_i\}$.

Finally, in order to see that each $\ol{g}_i$ is homotopic to $g_i$, we note that the latter only changes by homotopy and tubing into the spheres $\{R_j\}$. But each sphere $R_j$ is null-homotopic in $M$ since it is constructed by contracting a Clifford torus (\cref{lem:null-homotopic}).

\begin{remark}\label{rem:referee-arguments}
We now describe an alternative method to obtain the geometrically dual spheres in \cref{thm:FQ51A-intro} suggested by the referee.

As in the outline above, we can arrange that $\{f_i\}$ and $\{g_i\}$ are generically immersed, that the intersections and self-intersections of both $\{f_i\}$ and of $\{g_i\}$ are algebraically cancelling, and further that $\{f_i\}$ and $\{g_j\}$ are geometrically dual. Therefore the intersections and self-intersections of $\{f_i\}$ are paired by framed, immersed Whitney discs. By tubing $\{f_i\}$ to itself along one of each pair of Whitney arcs, the collection is upgraded to capped surfaces $\{f'_i\}$. Meridional discs and the Whitney discs provide the caps, after tubing into $\{g_i\}$ to ensure disjointness from the bodies of~$\{f'_i\}$.

Next, repeat the argument for the caps, as follows. First push all the intersections of $\{g_i\}$ with the caps of $\{f'_i\}$ down to the body and then into the (unpushed) $\{g_i\}$. This preserves the algebraic intersection conditions on $\{g_i\}$, while ensuring that the $\{g_i\}$ no longer intersect the caps of $\{f'_i\}$. Now the intersections among the caps of $\{f'_i\}$ are pushed down and tubed into the $\{g_i\}$, to arrange that the intersections among the caps of $\{f'_i\}$ are algebraically cancelling. Then we can assume that the cap intersections are paired by framed, immersed Whitney discs. Tube along one of each pair of Whitney arcs, and cap with meridional and Whitney discs, with intersections with $\{f'_i\}$ pushed down and tubed into $\{g_i\}$ as needed, to upgrade $\{f'_i\}$ to a collection of height two capped gropes.

Now apply grope height raising and the good group hypothesis to this collection to produce height four capped gropes with the same framed attaching region as $\{f_i\}$ and null-homotopic double point loops.

Let $\{C_\ell\}$ denote immersed discs bounded by these double point loops, arising from the null homotopies and with the appropriate boundary framing. For every intersection of $\{C_\ell\}$ with the height four capped gropes, push down and tube into parallel copies of $\{g_i\}$. Now the height four capped gropes can be equipped with $\{C_\ell\}$ as tower caps to produce 1-storey capped towers $\{\TT^c_i\}$. This is the outcome of \cite{FQ}*{Lemma~3.3}. Note that $\{g_i\}$ and the bodies of $\{\TT^c_i\}$ are geometrically dual, but we do not have any control yet on the intersections between $\{g_i\}$ and $\{C_\ell\}$.

Now we have the modification proposed by the referee. Take parallel push-offs of $\{g_i\}$, which we call $\{g_i'\}$. For every intersection of $\{C_\ell\}$ with $\{g_i\}$, push $\{g_i\}$ down to the base surface of the tower, and tube into copies of $\{g_i'\}$. Still call the result $\{g_i\}$. Note that we have now arranged that the intersections between $\{C_\ell\}$ and $\{g_i\}$ are algebraically cancelling, since by the pushing down procedure, each intersection between some element of $\{C_\ell\}$ and the original set of spheres $\{g_i\}$ has led to some even number of (algebraically cancelling) tubings into $\{g_i'\}$. Let $\{W_j\}$ denote a set of framed, immersed Whitney discs pairing the intersections between $\{C_\ell\}$ and~$\{g_i\}$. Remove any intersections between $\{W_j\}$ and anything in the 1-storey towers $\{\TT^c_i\}$ below the tower caps by pushing down and tubing into $\{g_i'\}$. Now use the geometric Casson lemma (\cref{lem:geometric-casson-lemma}) with $\{W_j\}$ to separate the caps of $\{\TT_i^c\}$ from $\{g_i\}$. The result is a new collection of 1-storey capped towers with geometrically dual spheres $\{g_i\}$.

The rest of the argument consists of first upgrading the $1$-storey capped towers to skyscrapers, also equipped with geometrically dual spheres, and then showing that the homeomorphism from any skyscraper to $D^2\times D^2$, relative to the attaching region, preserves the geometrically dual spheres. One can verify that the proofs of~\cite{FQ}*{Proposition~3.8~and~Theorem~4.1} accomplish this, by moving the intersection point with the dual spheres sufficiently close to the preserved attaching region.

Finally we check that the geometrically dual spheres produced are homotopic to the algebraically dual spheres in the hypotheses. Note that in the construction above each $g_i$ has been changed by homotopies and by tubing into $\{g'_i\}$. Since the tubing occurred after pushing down, each $g_j'$ is used for these tubing operations an algebraically cancelling number of times and therefore there is no  change in the homotopy class of $g_i$, as desired.
\end{remark}

\subsection{Generic immersions and intersection numbers}\label{subsec:generic-immersions}

Homotopy classes of smooth maps of a compact surface to a 4-manifold are represented by \emph{generic immersions}, which are immersions whose only singularities are transverse double points in the interior. In the topological category, we use this local description as the definition of a generic immersion. In particular, a generic immersion is locally a flat embedding and hence restricts to a locally flat embedding of the boundary. A \emph{regular homotopy} in the smooth category is a homotopy through immersions. A smooth regular homotopy of generically immersed surfaces in a $4$-manifold is generically a concatenation of (smooth) isotopies, finger moves, and Whitney moves~\cite{GoGu}*{Section~III.3}. A \emph{topological regular homotopy} of generically immersed surfaces in a $4$-manifold is by definition a concatenation of (topological) isotopies, finger moves, and Whitney moves.

In order to work effectively in a topological 4-manifold, it is key to be able to assume that a continuous map of a surface can be perturbed to a topological generic immersion~\cite{FQ}*{Lemma~1.2}. For instance, this is the first step in the proof of the disc embedding theorem in a topological 4-manifold. It also follows that a topological generic immersion $f\colon \Sigma \imra M$ has a linear normal bundle $\nu(f)$. Similarly, it is also essential to be able to decompose a topological homotopy into a sequence of regular homotopies and cusp homotopies~\cite{FQ}*{Proposition~1.6}. This latter proposition was stated in~\cite{FQ} without a proof, so we provide one.

Specifically, the combination of~\cite{FQ}*{Lemma~1.2~and~Proposition~1.6} can be stated in the following useful way, which is what we prove in~\cref{sec:generic-immersions}.

\begin{theorem}\label{thm:gen-immersions-intro}\label{theorem:generic-immersions-bijection}
Let $\Sigma$ be a disjoint union of discs or spheres, and let $M$ be a $4$-manifold.
  The subspace of generic immersions in the space of all continuous maps
leads to a bijection
\[
\frac{\{(f_1,\dots,f_m)\colon \Sigma= \Sigma_1  \sqcup \cdots \sqcup \Sigma_m  \imra M \mid \mu(f_i)_1 = 0,\, i=1,\dots,m\}} { \{\text{isotopies, finger moves, Whitney moves}\} } \longleftrightarrow [\Sigma,M]_{\partial}, \]
where $\mu(f_i)_1 \in\Z$ denotes the signed sum of double points of $f_i$ whose double point loops are trivial in $\pi_1(M)$, and $[\Sigma,M]_{\partial}$ denotes the set of homotopy classes of continuous maps that restrict on~$\partial\Sigma$ to locally flat embeddings disjoint from the image of the interior of $\Sigma$.
Moreover, any such homotopy between generic immersions is homotopic rel.\ $\Sigma \times \{0,1\}$ to a sequence of isotopies, finger moves and Whitney moves.
\end{theorem}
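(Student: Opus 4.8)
The plan is to establish the map separately on the level of well-definedness, surjectivity, and injectivity, and to reduce the last (which simultaneously yields the ``Moreover'' clause) to a generic one-parameter family argument in the smooth category.

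\emph{Well-definedness and surjectivity} are routine. The displayed map records the homotopy class rel~$\partial$. Isotopies, finger moves and Whitney moves are homotopies fixing the boundary behaviour, so they preserve the class in $[\Sigma,M]_\partial$; and each of them either does not alter the self-intersections of $f_i$ (this is the case for a finger or Whitney move involving two distinct components) or creates, respectively destroys, a cancelling pair of self-intersections of $f_i$ with opposite signs and the same double point loop, so $\mu(f_i)_1$ is preserved (cf.\ \cite[Lemma~1.2]{FQ}). Thus the map is well defined. For surjectivity, Proposition~\ref{prop:hom-gen-immersion} represents any class in $[\Sigma,M]_\partial$ by a generic immersion $F$; a local cusp homotopy on a component $\Sigma_i$ changes $\mu(f_i)_1$ by $\pm 1$ while changing neither the homotopy class nor the other components, so finitely many of these arrange $\mu(f_i)_1=0$ for all $i$.

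\emph{Injectivity and the ``Moreover'' clause.} Suppose $F_0,F_1$ are generic immersions with $\mu(f_i)_1=0$ and $H\colon\Sigma\times I\to M$ is a homotopy rel~$\partial$ between them; we must homotope $H$ rel $\Sigma\times\{0,1\}$ to a concatenation of isotopies, finger moves and Whitney moves. First I would pass to the smooth category, following the proof of Proposition~\ref{prop:hom-gen-immersion}: after homotoping $H$ rel ends to miss a point $p\in M$ (possible since the image of the $3$-complex $\Sigma\times I$ compresses off a point in the connected $4$-manifold $M$), the manifold $M\setminus\{p\}$ is a smoothable connected noncompact $4$-manifold, and one can choose smooth structures on $\Sigma$ and $M\setminus\{p\}$, and perform a preliminary topological isotopy of $F_0,F_1$ (which is among the allowed moves), so that $H$ becomes a smooth homotopy between smooth generic immersions. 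By the standard transversality theory of generic one-parameter families of maps $\Sigma^2\to M^4$ --- in which the only codimension-one phenomena are tangencies of sheets and births/deaths at Whitney umbrellas --- $H$ is then, up to homotopy rel ends, a finite concatenation of isotopies, finger moves, Whitney moves, and \emph{cusp moves}, each cusp move being the birth or death of a single transverse double point, which necessarily has contractible double point loop. It remains to eliminate the cusps. A cusp move on $\Sigma_i$ changes $\mu(f_i)_1$ by $\pm 1$, whereas all other moves preserve it; since $\mu(f_i)_1$ equals $0$ at both ends, the signed counts of cusps on each $\Sigma_i$ force the cusps on $\Sigma_i$ to admit a pairing in which every pair is a birth and a death of equal sign, or a positive birth and a negative birth, or a positive death and a negative death. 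Because all cusped double points lie on a single, simply connected component and have contractible loops, one can homotope $H$ rel ends --- a controlled manipulation of two-parameter families --- so as to bring the two cusps of each pair together in space and time, after which the pair either annihilates (becoming an isotopy) or fuses into a single finger move or Whitney move. Finitely many such modifications leave $H$ cusp-free, proving the ``Moreover'' clause; injectivity of the displayed map is then immediate.

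\emph{Main obstacle.} The delicate part is the cusp elimination: although the tally of cusps to be paired off is dictated by the hypothesis $\mu(f_i)_1=0$, realising the cancellations requires care with one- and two-parameter families and with the behaviour of double point loops along the homotopy, and it is precisely here that the restriction to components that are discs or spheres --- so that double points can be slid about without acquiring nontrivial loops --- is used. (For discs there is a further point, namely that the induced normal framing on the boundary must not provide an extra invariant obstructing the bijection; this is supplied by the invariant-theoretic content of \cite[Lemma~1.2]{FQ}, which relates $\mu(f_i)_1$, the boundary framing, and the homological self-intersection number.)
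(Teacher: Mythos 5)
Your proposal is correct and follows essentially the same route as the paper: well-definedness is immediate, surjectivity comes from Proposition~\ref{prop:hom-gen-immersion}\eqref{item:hom-to-gen-imm-1} plus local cusps, and injectivity comes from Proposition~\ref{prop:hom-gen-immersion}\eqref{item:hom-to-gen-imm-2} together with the classification of singularities of generic homotopies and pairwise cancellation of cusps. The only difference is presentational: the paper cites \cite[p.~23]{FQ} for the cusp cancellation, whereas you sketch it directly, and the paper arranges the smooth structure by first making $F$ and $F'$ transverse via topological transversality and extending the smooth structure from a neighbourhood of their images.
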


As usual, the self-intersection number $\mu(f)$ of a generic immersion $f$ of a disc or sphere into $M$ is obtained by summing signed group elements $g \in \pi_1(M)$ corresponding to double points in the interior of $f$~\citelist{\cite{Wall-surgery-book}*{Chapter~5}\cite{FQ}*{Section~1.7}}.
The ambiguity of the choice of sheets at each double point leads to the relations $g - w(g)g^{-1}$ in $\Z[\pi_1 M]$. Local cusp homotopies allow us to change the coefficient $\mu(f)_1$ at the trivial fundamental group element~$1$ at will, and correspondingly we factor out by $\Z\cdot 1$. We obtain the \emph{reduced} self-intersection number, already used in the assumptions of \cref{thm:FQ51A-intro}:
\[
\wt{\mu} \colon \pi_2(M) \longrightarrow \Z[\pi_1 M]/ \langle g - w(g)g^{-1},\Z\cdot 1 \rangle.
\]
While $\mu(f)$ depends on the choice of generically immersed representative $f\colon S^2 \looparrowright M$, the reduced version $\wt{\mu}$ only depends on $[f]\in\pi_2(M)$. Any two generic immersions homotopic to $f$ are of course themselves homotopic. By the injectivity in \cref{thm:gen-immersions-intro} we see that $\wt{\mu}$ is well-defined on $\pi_2(M)$, since the quantity is evidently unchanged by isotopies, finger moves, and Whitney moves.

For any $f\colon S^2\looparrowright M$, the invariant $\mu$ satisfies
\begin{equation}\label{qr}
\lambda(f,f) = \mu(f) + \overline{\mu(f)} + e(f)\cdot 1 \in \Z[\pi_1 M]
\end{equation}
where $e(f)\in \Z$ is the \emph{Euler number} of the normal bundle $\nu(f)$ and the involution $\ol{g}:=w(g)g^{-1}$ is extended linearly to the group ring. Apply the augmentation map $\varepsilon\colon\Z[\pi_1 M]\to\Z$ to this equation, to see that modulo~2, $\varepsilon(\lambda(f,f)) \equiv e(f)$ only depends on the homotopy class of $f$. This is also the \emph{Stiefel-Whitney number} $w_2(f)\in\Z/2$ of $\nu(f)$ which will be used in the assumption of \cref{thm:FQ51B} in the next section.  As in the discussion above one sees that $w_2(a)=0$ if and only if~$a\in\pi_2(M)$ is represented by a generic immersion $f\colon S^2\looparrowright M$ that can be framed, i.e.\ whose normal bundle is trivial.

\subsection*{Conventions}
 All manifolds are assumed to be based, in order to define homotopy groups and equivariant intersection numbers. Topological embeddings are always assumed to be locally flat.

\subsection*{Outline}

In \cref{sec:compendium} we give applications of the disc embedding theorem with geometrically dual spheres 
from the literature. In particular we recall the sphere embedding theorem, and the \emph{hyperbolic embedding theorem}, which gives us the ability to represent a rank $2k$ hyperbolic summand of the intersection form by an $\#_{i=1}^k S^2 \times S^2$ connected summand.
In \cref{sec:prelims}, we review the objects, geometric constructions, and further results needed for the proof of the disc embedding theorem, then \cref{sec:technical-lemma} contains the main technical results needed for the construction of geometrically dual spheres. These results are applied in \cref{sec:proofs} to prove the disc embedding theorem (\cref{thm:FQ51A-intro}). Here we also explain an alternative argument kindly suggested by a referee.  Finally, in \cref{sec:generic-immersions} we discuss generic immersions and prove \cref{thm:gen-immersions-intro}.

\subsection*{Acknowledgements}
We are indebted to the $4$-manifolds semester at the Max Planck Institute for Mathematics in Bonn in 2013, and in particular to lectures of Michael Freedman, Frank Quinn, and Robert Edwards given as part of the semester. Much of the work leading to this paper took place at the Hausdorff Institute for Mathematics in 2016, or during subsequent visits by MP to the MPIM, or by AR to Durham University. We thank these institutions for their hospitality.
We thank Stefan Friedl, Daniel Kasprowski, Slava Krushkal, Allison N.~ Miller, Patrick Orson, and Frank Quinn for their insights and valuable discussions. We thank Michael Freedman for encouraging us to write this paper. Finally we are grateful to an anonymous referee for comments that helped improve the exposition, and the alternative argument outlined in \cref{rem:referee-arguments}.

MP was partially supported by EPSRC New Investigator grant EP/T028335/2 and EPSRC New Horizons grant EP/V04821X/2.

\section{Applications of geometrically dual spheres in the literature}\label{sec:compendium}

\subsection{The sphere embedding theorem}\label{sec:sphere-embedding-intro}
A central application of the disc embedding theorem is to prove the \emph{sphere embedding theorem}. The existence and exactness of the topological surgery sequence in dimension four are proven by changing a collection of generically immersed spheres with vanishing intersection and self-intersection numbers, by a regular homotopy, to pairwise disjoint embedded spheres~\cite{FQ}*{Theorem~11.3A}. The sphere embedding theorem describes precisely when such embedded spheres may be found. The proof of~\cite{FQ}*{Theorem~11.3A} neglects to mention that geometrically dual spheres are essential for performing surgery on $4$-manifolds without inadvertently modifying the fundamental group. The sphere embedding theorem is also integral to any known classification result for topological $4$-manifolds, including those that use Kreck's modified surgery theory~\cite{surgeryandduality}, for example \cite{HKT}. Another example is the main step in \cite{Hambleton-Kreck:1988-1}*{Lemma~4.1}, which uses sphere embedding to move from the easier stable classification up to connected sums with copies of $S^2 \times S^2$ to unstable classification results.
We refer to \cite{DET-book-s-cob-SET}*{Section~20.3} for a proof of the sphere embedding theorem.

\begin{theoremalpha}[Sphere embedding theorem with framed duals]\label{sphere-embedding-thm}\label{thm:FQ51B}
Let $M$ be a connected $4$-manifold with good fundamental group and consider a continuous map
\[
F = (f_1,\dots,f_k)\colon (S^2 \sqcup \cdots \sqcup S^2) \longrightarrow M
\]
satisfying $\wt\mu(f_i)=0$ for every $i$ and $\lambda(f_i,f_j)=0$ for $i\neq j$, with a collection of algebraically dual spheres $\{g_i\}_{i=1}^k$ with $w_2(g_i)=0$ for each $i$.
Then there is a locally flat embedding
\[
\ol{F} = (\ol{f}_1,\dots,\ol{f}_k)\colon (S^2 \sqcup \cdots \sqcup S^2) \hookrightarrow M
\]
with $\ol{F}$ homotopic to $F$ and with a generically immersed, geometrically dual collection of framed spheres $\{\ol{g}_i\}_{i=1}^k$, such that $\ol{g}_i$ is homotopic to $g_i$ for each~$i$.

Moreover, if $f_i$ is a generic immersion and $e(f_i)\in\Z$ is the Euler number of the normal bundle~$\nu(f_i)$, then $\ol{f}_i$ is regularly homotopic to $f_i$ if and only if $e(f_i)=\lambda(f_i,f_i)$.
\end{theoremalpha}

\begin{remark}
The assumption  $\wt\mu(f_i)=0$ implies that $\lambda(f_i,f_i) \in \Z\cdot 1 \subseteq \Z[\pi_1 M]$, by~\eqref{qr}.
In this case, \eqref{qr} gives an equation of integers $\lambda(f_i,f_i) = 2\cdot \mu(f_i)_1 + e(f_i)$ and hence the last condition~$e(f_i)=\lambda(f_i,f_i)$ in \cref{sphere-embedding-thm} is equivalent to the vanishing of $\mu(f_i)$.
\end{remark}

\begin{remark}\label{rem:g-Lagrangian}
If we assume in addition to the hypotheses of \cref{sphere-embedding-thm} that $\lambda(f_i,f_i)=0$ for all $i$, then we can get disjointly embedded framed spheres $\{\ol{f}_i\}$ as an output, since then $w_2(\ol{f}_i)=w_2(f_i)\equiv \varepsilon(\lambda(f_i,f_i))=0$ for all $i$, by the equality~\eqref{qr}. As a consequence, we can surger $M$ along~$\{\ol{f}_i\}$ to obtain a 4-manifold $M'$. The existence of the geometric duals $\ol{g}_i$ implies that $\pi_1(M') \cong \pi_1(M)$, but indeed more is true: each intersection (or self-intersection) of some $\ol{g}_j$ with a fixed $\ol{g}_i$ can be tubed into the unique intersection point between $\ol{g}_i$ and (a parallel copy of)~$\ol{f}_i$, resulting in geometric duals $h_i$ to $\ol{f}_i$ that are now disjointly embedded. This means that each pair $(\ol{f}_i, h_i)$ has a regular neighbourhood that is a sphere bundle over $S^2$, with a 4-ball removed. The sphere bundle is trivial if and only if $w_2(h_i)=w_2(g_i)=0$.

So in the setting of \cref{sphere-embedding-thm} where all $w_2(g_i)$ are assumed to vanish, we get a connected sum decomposition $M$ as a connected sum with copies of $S^2\times S^2$. Note that the $a_i:=[f_i]$ form the first Lagrangian, but the second Lagrangian generated by $b_i:=[h_i]$ is formed by linear combinations of the $[g_i]$ and $[f_i]$ induced by the geometric manoeuvres above.
\end{remark}

We record an important  special case of the situation described in \cref{rem:g-Lagrangian} in the following theorem.

\begin{theorem}[Hyperbolic embedding theorem]\label{cor:hyperbolic}
Let $M$ be a connected 4-manifold with good fundamental group and let $H$ be a hyperbolic form in $(\pi_2(M), \lambda_M, \wt\mu_M)$, meaning that $H$ is a $\Z[\pi_1 (M)]$-submodule of $\pi_2(M)$, generated by a hyperbolic basis consisting of classes
$a_1,\dots, a_k, b_1, \dots, b_k\in H$ with
\[
\lambda(a_i,b_j)=\delta_{ij},\, \lambda(a_i, a_j)=0=\lambda(b_i, b_j) \text{ and }  \wt\mu(a_i)=0=\wt\mu(b_i) \text{ for all }  i,j.
\]
Then there is a homeomorphism $M \approx (\#_{i=1}^k S^2 \times S^2) \# M'$ with a connected sum that on $\pi_2$ sends $a_i$ to $[S^2_i \times \{\pt_i\}]$ and $b_i$ to $ [\{\pt_i\} \times S^2_i]$. In particular, $H$ is an orthogonal summand freely generated by $\{a_i,b_i\}$ and $\pi_2(M) \cong H \perp \pi_2(M')$.
\end{theorem}

\cref{cor:hyperbolic} was stated on the first page of Freedman's ICM talk \cite{Freedman:1984-1}. Together with Donaldson's theorem on definite intersection forms for smooth 4-manifolds~\cite{Donaldson}, it implies the existence of infinitely many non-smoothable 4-manifolds. For example, a simply connected, closed $4$-manifold with intersection form $E_8 \oplus E_8$ can be obtained from the $K3$ surface by removing a hyperbolic form of rank~$6$. Since the form $E_8 \oplus E_8$ is definite but not diagonalisable, it cannot be realised by a closed, smooth, 4-manifold.

\begin{proof}[Proof of \cref{cor:hyperbolic}]
Represent the $a_i$ and $b_i$ by framed generic immersions, using \cref{prop:hom-gen-immersion} and the fact that in $\Z/2$ we have $w_2(a_i) \equiv \varepsilon(\lambda(a_i,a_i))=0$ and $w_2(b_i) \equiv \varepsilon(\lambda(b_i,b_i)) = 0$ for all $i$. Apply \cref{sphere-embedding-thm} to these framed generic immersions. The output is as in \cref{rem:g-Lagrangian}, except that we have the additional information that $\lambda(b_i,b_j)=0=\wt\mu(b_i)$ for all $i,j$. This means that the operations of tubing into the $\{\ol{f}_i\}$ occur in such a way that up to homotopy there is no effect, i.e.\ the resulting disjointly embedded spheres $\{h_i\}$ still represent the~$\{b_i\}$ in $\pi_2(M)$.
\end{proof}

\begin{remark}\label{rem:easier-SET}
In this paper we prove the disc embedding theorem with geometrically dual spheres, and then use it to deduce the sphere embedding theorem with geometrically dual spheres. Since the former is used throughout~\cite{FQ}, our proof fills the gap in that source. However, for applications in surgery theory, one begins with a dual spherical sublagrangian of the intersection form, as in \cref{cor:hyperbolic}.
In this case, the following proof of \cref{cor:hyperbolic} is available, which applies the disc embedding theorem without geometrically dual spheres. The idea is due to Casson~\cite{Casson}; we thank Slava Krushkal for reminding us of it.

Suppose we are given classes $a_1,\dots, a_k, b_1, \dots, b_k\in \pi_2(M)$ for a topological $4$-manifold $M$ with good fundamental group satisfying
\[
\lambda(a_i,b_j)=\delta_{ij},\, \lambda(a_i, a_j)=0=\lambda(b_i, b_j) \text{ and }  \wt\mu(a_i)=0=\wt\mu(b_i) \text{ for all }  i,j.
\]
Assume $\{a_i\}$ and $\{b_i\}$ are represented by immersed spheres $\{A_i\}$ and $\{B_i\}$ respectively. Let $\{A_i^+\}$ and $\{B_i^+\}$ denote push-offs of $\{A_i\}$ and $\{B_i\}$ respectively. For each $i$, choose small $4$-discs around the unpaired intersection point between $A_i$ and $B_i$, not intersecting $\{A_i^+\}$ and $\{B_i^+\}$. Tube the discs together using embedded $1$-handles in $M$ without intersecting $\{A_i\}\cup \{A_i^+\}\cup\{B_i\}\cup\{B_i^+\}$, and call the result $D$. The portion of $\{A_i\}$ and $\{B_i\}$ lying in $M\sm D$ are immersed discs $\{A_i^0\}$ and $\{B_i^0\}$ with trivial intersection and self-intersection numbers,  equipped with algebraically dual spheres $\{B_i^+\}\cup \{A_i^+\}$. Apply local cusp moves to $\{A_i^0\}$ and $\{B_i^0\}$ to ensure that the unreduced self-intersection numbers vanish. Apply the version of~\cite{FQ}*{Corollary~5.1B} without geometrically dual spheres to $\{A_i^0\}$ and $\{B_i^0\}$, to produce regularly homotopic, disjoint, embedded discs with the same framed boundary. Replacing the portions of $\{A_i\}$ and $\{B_i\}$ within $D$ produces the desired geometrically transverse embeddings in the classes $a_1,\dots, a_k, b_1, \dots, b_k\in \pi_2(M)$.

It is tempting to attempt a similar strategy to prove the general disc and sphere embedding theorems with geometrically dual spheres. However, this does not work, even for sphere embedding. In the general sphere embedding scenario, one set of spheres may not have the requisite triviality of intersection and self-intersection numbers. This arises in applications, for example in the construction of star partners for $4$-manifolds with odd intersection form~\cite{FQ}*{Section~10.4}, \cite{Stong-conn-sum} \cite{Teichner:1997-1}, \cite{Kasprowski-Powell-Ray-survey}.  Also, it was essential that we were working with spheres, since `dual discs' are not as helpful.
\end{remark}

\subsection{Other instances of geometrically dual spheres}

First we consider the book~\cite{FQ}, which is generally regarded as the canonical source for the ramifications of the disc embedding theorem.

\begin{enumerate}
    \item On page 105 of \cite{FQ}, in Part II of the book, within the proof of the technical version of $h$-cobordism theorem, it is claimed that $1$-storey capped towers with geometrically dual spheres were constructed in Part I. Geometrically dual spheres are used on page 107, where they are key to proving the \emph{negligibility property}.
\item The technical version of the $h$-cobordism theorem is used in the proof of the technical controlled $h$-cobordism theorem (\cite{FQ}*{Theorem~7.2C}), which in turn
appears in the proofs  of the proper $h$-cobordism theorem (Corollary~7.3C) and the annulus conjecture (Theorem~8.1A).
The negligibility property (called a \emph{regular homotopy property} in the proof of 8.1A) is invoked in an essential way.
\item The annulus conjecture is used in \cite{FQ} to prove topological transversality (Section~9.5), existence and uniqueness of normal bundles (Section~9.3), and smoothing results (Chapter~8).
\item The geometrically dual spheres claimed in \cref{thm:FQ51B} (\cite{FQ}*{Corollary~5.1B}) also arise in \cite{FQ} in the proof of the $\pi_1$-negligible embedding theorem (Theorem~10.5A), the plus construction (Theorem~11.1A), and the $\pi-\pi$ lemma (p.\ 216).
\end{enumerate}

\noindent Next we consider geometrically dual spheres elsewhere in the literature.
\begin{enumerate}[resume]
\item Similar to classical surgery theory, uses of Kreck's modified surgery~\cite{surgeryandduality} to obtain classification results on $4$-manifolds, for example in~\cite{Hambleton-Kreck:1988-1}, \cite{Hambleton-Kreck-93} and \cite{HKT}, needs geometrically dual spheres to avoid changing the fundamental group by surgery on embedded  spheres obtained using the sphere embedding theorem.
\item
For $M$ a compact 4-manifold with $\pi_1(M)$ good, every element of the Whitehead group $\operatorname{Wh}(\pi_1(M))$ is realised as the Whitehead torsion of an $h$-cobordism $(W;M,M')$ based on~$M$. To prove this, one builds a cobordism with 2- and 3-handles. When attaching the 3-handles, one can find smoothly embedded spheres in the desired homotopy classes, but it is not known how to find such embeddings smoothly that also come with geometrically dual immersed spheres. Without the geometric duals, the inclusion induced map~$\pi_1(M') \to \pi_1(W)$ need not be an isomorphism.  Thus the sphere embedding theorem is needed to obtain topologically locally flat embeddings for the 3-handle attachments, with geometrically dual spheres. See~\cite{Kasprowski-Powell-Ray-survey}*{Theorem~3.5}.
\item
Geometrically dual spheres are needed to show that the complement of the topological slice disc produced in~\cite{Garoufalidis-Teichner} for a knot with Alexander polynomial one has fundamental group~$\Z$, and similarly in \cite{FriedlTeichner} for slice disc exteriors with fundamental group $\Z \ltimes \Z[1/2]$.
\end{enumerate}

The papers of Casson~\cite{Casson} and Freedman~\cites{F,Freedman:1984-1} do not have problems relating to geometrically dual spheres.
A minor point is that in~\cite{F}*{Theorem~1.2}, Freedman claimed to prove the exactness of the surgery sequence by embedding half of each hyperbolic pair representing the surgery kernel in a simply connected manifold by an embedded sphere, however this can be easily fixed using the method of \cref{rem:easier-SET} instead.

\section{Definitions and operations}\label{sec:prelims}

We shall assume that a Whitney disc is a generic immersion $W\colon D^2\imra M$, which in particular implies that the boundary is embedded, with each Whitney arc lying on one of the sheets whose intersection points are paired by $W$.  We allow interior self-intersections of $W$ as well as intersections with other surfaces (and other Whitney discs) but assume that these are transverse, using topological transversality. As usual, a Whitney disc $W$ is said to be \emph{framed} if the Whitney section of the normal bundle $\nu(W)$ of $W$ restricted to its boundary extends to a non-vanishing section of~$\nu(W)$. The relative normal Euler number of $W$ in $\Z$ is by definition the unique obstruction for the existence of a framing of $\nu(W)$ extending the Whitney section.
The framing of a Whitney disc can be altered by interior twists and boundary twists; see~\cite{FQ}*{Section~1.3}.

Given two Whitney discs, the corresponding Whitney circles may \textit{a priori} intersect one another.
We can ensure that Whitney circles are disjoint by pushing one Whitney circle along the other.
From now on, we will assume that Whitney circles are pairwise disjoint and embedded.

In our proofs, we will construct \emph{gropes} and ultimately \emph{towers}, as defined in~\cite{DET-book-gropestowers} (see also~\cite{FQ}*{Chapters~2~and~3}). We assume the reader is familiar with these definitions. We will need the following generalisation of the notion of dual spheres.

\begin{definition}[Dual capped gropes and surfaces]\label{defn:dual-cappped-gropes-surfaces}
Let $\{A_i\}$ be a collection of immersed discs, gropes, or towers, with or without caps. A collection $\{T_i^c\}$ of (sphere-like) capped gropes is said to be (geometrically) \emph{dual} to $\{A_i\}$ if $T_i^c\pitchfork A_j$ is a single transverse point when $i=j$, located in the bottom stages of $T_i^c$ and $A_i$, and $T_i^c\pitchfork A_j$ is empty otherwise. For $\{A_i\}$ a generically immersed collection of discs, the \emph{bottom stage} of $A_i$ is simply itself. Note that all the caps of $\{A_i\}$ are required to be disjoint from the caps of $\{T_i^c\}$. Additionally, note that intersections are allowed within the collection~$\{T_i^c\}$.
If each $T_i^c$ is a capped grope of height one, then we say that $\{T_i^c\}$ is a collection of \emph{dual capped surfaces} for $\{A_i\}$.
\end{definition}

We will use the definition of a good group in the proof of \cref{lem:grope-to-tower}, so we recall it here.

\begin{definition}[\cites{Freedman-Teichner:1995-1,DET-book-goodgroups}]\label{def:good-group}
    A group $\Gamma$ is said to be \emph{good} if for every height $1.5$ disc-like capped grope $G^c$, with some choice of basepoint, and for every group homomorphism $\phi\colon \pi_1(G^c)\to\Gamma$, there exists an immersed disc $D\looparrowright G^c$ whose framed boundary coincides with the attaching region of $G^c$, such that the double point loops of $D$, considered as fundamental group elements by making some choice of basing path, are mapped to the identity element of $\Gamma$ by $\phi$.
\end{definition}

We will need the following lemma, to trade intersections between distinct surfaces for self-intersections.

\begin{lemma}[Geometric Casson lemma~\cite{F}*{Lemma~3.1} (see also~\cite{DET-book-basicgeo}*{Lemma~15.3}]\label{lem:geometric-casson-lemma}
Let~$F$ and $G$ be transverse generic immersions of compact surfaces in a connected 4-manifold $M$.
Assume that the intersection points $\{p,q\}\subset F\pitchfork G$ are paired by a Whitney disc $W$. Then there is a regular homotopy from $F \cup G$ to $\ol F \cup \ol{G}$ such that  $\ol F \pitchfork \ol{G} = (F\pitchfork G) \smallsetminus \{p,q\}$, that is the two paired intersections have been removed.
The regular homotopy may create many new self-intersections of $F$ and $G$; however, these are algebraically cancelling. Moreover, the regular homotopy is supported in a small neighbourhood of $W$.
\end{lemma}

Applications of this lemma, proven inductively on the number of intersection points, include the following.
 \begin{enumerate}[label=(\roman*)]
   \item Making $F$ and $G$ disjoint, if all intersection points $F\cap G$ are paired by Whitney discs.
   \item Turning algebraically dual spheres $G$ for $F$ into geometrically dual spheres $\ol G$.
 \end{enumerate}

The process of \emph{contraction and push-off}, introduced in~\cite{FQ}*{Section~2.3} (see also~\cite{DET-book-basicgeo}*{Section~15.2.5}) will be important in our proof. \emph{Contraction} converts a capped surface into an immersed disc using two parallel copies of both caps. Given a capped grope, we can iteratively contract caps, to eventually obtain a collection of immersed spheres or discs called the \emph{total contraction}.

After contracting a surface, any other surface that intersected the caps can (but does not necessarily have to) be pushed off the contraction. This reduces the number of intersection points between the resulting contraction and the pushed off surfaces. Suppose that a surface $A$ intersects a cap of the capped surface, and a surface $B$ intersects a dual cap. Then after pushing both $A$ and $B$ off the contraction, we obtain two intersection points between $A$ and $B$. The contraction push-off operation is shown in \cref{fig:contraction-push-off}.

\begin{figure}[htb]
\begin{subfigure}{\textwidth}
	\centering
	\includegraphics[width=\linewidth]{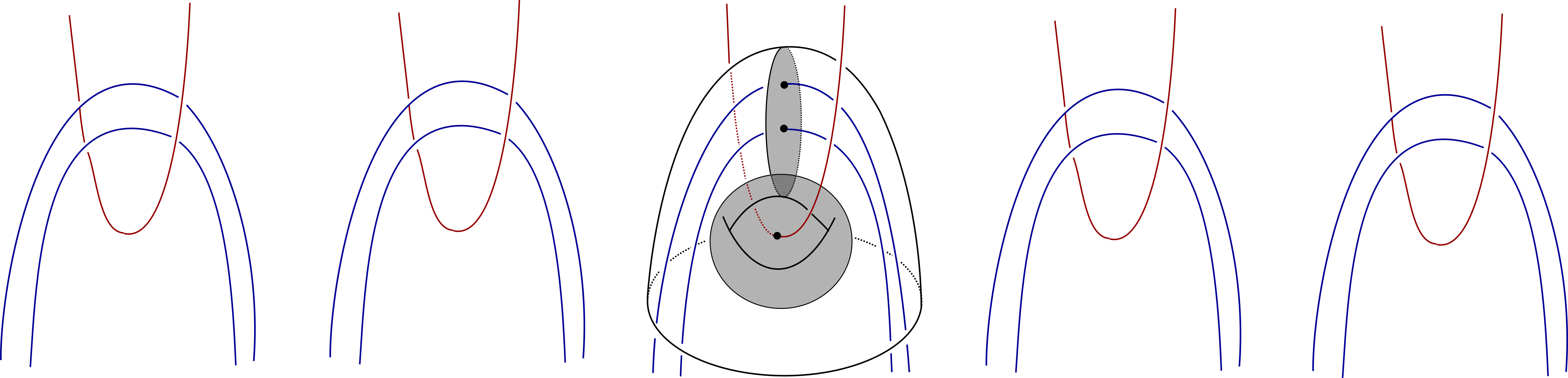}
	\caption{Before contraction of a surface.}
	\label{fig:contraction-push-off-1}
\end{subfigure}
\newline
\vspace{2mm}
\begin{subfigure}{\textwidth}
	\centering
	\includegraphics[width=\linewidth]{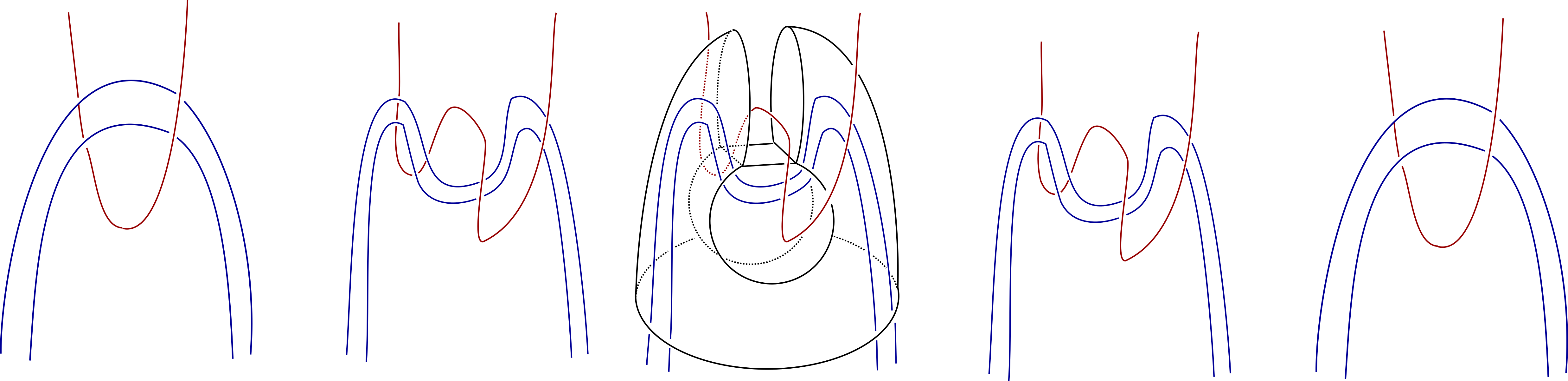}
	\caption{After contraction of a surface, and pushing other surfaces off the caps.}
	\label{fig:contraction-push-off-2}
\end{subfigure}
	\caption{Contraction and push-off. Note the intersections of pushed-off surfaces that occur between diagrams one and two and between diagrams four and five in the bottom row of figures, namely one intersection in the past and one intersection in the future between each pair of surfaces pushed off dual caps. }
	\label{fig:symmetric-contraction2}\label{fig:contraction-push-off}
\end{figure}

\begin{lemma}\label{lem:surgery-indep-caps}
  The homotopy class of the sphere or disc resulting from symmetric contraction of a fixed surface is independent of the choice of caps, provided the boundaries of the different choices of caps coincide.
\end{lemma}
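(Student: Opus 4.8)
The plan is to show that if $\{c_i\}$ and $\{c_i'\}$ are two collections of caps for a fixed surface $\Sigma$ (say a capped surface, as the general capped-grope case reduces to this by iterated contraction) with $\partial c_i = \partial c_i'$ for each $i$, then the immersed sphere or disc $\Sigma^c$ obtained by symmetric contraction using $\{c_i\}$ is homotopic to the one, $\Sigma^{c'}$, obtained using $\{c_i'\}$. The key geometric observation is that the two parallel copies of a cap used in symmetric contraction, together with a thin annular piece of $\Sigma$ near the dual curve, form a sphere (or the boundary of a $3$-ball when the caps are embedded and disjoint) that is swept out by the family of caps interpolating between $c_i$ and $c_i'$. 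Concretely, the contracted surface can be described, up to homotopy, as $\Sigma$ itself with the part near a symplectic half-basis of curves surgered along the caps; homotoping one cap to another drags the corresponding piece of $\Sigma^c$ along with it.

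First I would recall the explicit model of symmetric contraction: fix a symplectic basis $\{x_i, y_i\}$ of $H_1(\Sigma)$, with caps $c_i$ attached along (pushoffs of) the $y_i$; to contract, remove a neighbourhood of $\bigcup y_i$ in $\Sigma$ and glue in two oppositely-oriented parallel pushoffs of each $c_i$. This produces a generic immersion whose underlying map is, up to homotopy, the composite of the inclusion of a punctured-torus-free piece of $\Sigma$ with a map that factors through the caps. Second, given the second collection $\{c_i'\}$, since $\partial c_i = \partial c_i'$ the union $c_i \cup (-c_i')$ is a map of a sphere into $M$; choose a homotopy $h_i \colon D^2 \times [0,1] \to M$ rel boundary from $c_i$ to $c_i'$ (this exists because $\pi_1(\text{nothing needed})$ — in fact any two maps of a disc with the same boundary are homotopic rel boundary, as $D^2$ is contractible). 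Third, I would run all the homotopies $h_i$ simultaneously on the two parallel pushoff copies of each $c_i$ appearing in $\Sigma^c$; this is a homotopy of the generically immersed surface (after a small perturbation to keep things generic, using Theorem~\ref{thm:gen-immersions-intro} to promote it to a regular homotopy if desired, though only a homotopy is claimed) carrying $\Sigma^c$ to $\Sigma^{c'}$. Since the rest of the surface is untouched, the result is homotopic to $\Sigma^{c'}$.

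The main obstacle is bookkeeping the framings and the parallel copies: symmetric contraction uses \emph{two} parallel pushoffs of each cap, and one must check that homotoping $c_i$ to $c_i'$ can be done coherently on both pushoffs so that the glued-in pieces still close up correctly with the surgered surface at every time — i.e. that the interpolating family remains a legitimate (generically immersed) surface and not merely a singular map. This is where one invokes that the caps carry a normal framing that is determined near the boundary, so the two pushoffs move rigidly with the cap; any interior framing ambiguity changes $c_i$ by interior twists, which only affect the normal bundle and hence the regular homotopy class, not the homotopy class asserted here. I would therefore emphasise that the statement is only about the \emph{homotopy} class, so normal-bundle and framing subtleties are irrelevant, and the argument reduces to the elementary fact that maps of discs rel a fixed boundary are unique up to homotopy. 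For a capped grope of height $>1$, apply this one surface stage at a time, from the top down, each contraction replacing a capped stage by an immersed surface with caps on the stage below, and observe that changing caps at the top does not affect the lower stages; induction on height finishes the proof.
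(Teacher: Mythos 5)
There is a genuine gap at the heart of your argument: the claim that the rel-boundary homotopies $h_i$ from $c_i$ to $c_i'$ exist ``because $D^2$ is contractible'' is false. Two maps of a disc into $M$ that agree on the boundary glue to a map of a sphere, and they are homotopic rel boundary if and only if the resulting class in $\pi_2(M)$ vanishes; in general it does not. Contractibility of $D^2$ only gives a free homotopy, which is useless for your gluing construction. Worse, the lemma is applied (in Lemma~\ref{lem:null-homotopic}) precisely in a situation where the two cap systems --- meridional caps versus caps tubed into the dual spheres $E_i$ --- differ by homotopically \emph{nontrivial} spheres, so the homotopies $h_i$ you need do not exist in the intended application. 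One could try to rescue your approach by arguing that the two parallel push-offs of each cap occur with opposite orientations in the contraction, so that the $\pi_2$-discrepancies of the two copies cancel; but that requires an additional argument (with some care about basings and group elements), and it is not what you wrote.

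The paper's proof sidesteps the issue entirely with a swap trick exploiting the dual pairs. The input from \cite[Section~2.3]{FQ} is that symmetric contraction on dual pairs of caps $\{C_i,D_i\}$ is homotopic to \emph{asymmetric} surgery on one cap per pair. Asymmetric surgery on the $\{C_i\}$ does not use the $D_i$ at all, so one may replace each $D_i$ by $D_i'$ for free; one then passes to asymmetric surgery on the $\{D_i'\}$, which does not use the $C_i$, and replaces each $C_i$ by $C_i'$. At no point is it necessary to compare two caps with the same boundary directly. Your top-down induction for gropes of greater height is fine, but the base case needs this argument rather than the one you gave.
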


\begin{proof}
  As explained in \cite{FQ}*{Section~2.3}, an isotopy in the model induces a homotopy of the immersed models, so the symmetric contraction is homotopic to the result of surgery along one cap per dual pair. Now let $\{C_i,D_i\}_{i=1}^g$ and $\{C_i',D_i'\}_{i=1}^g$ be two sets of caps for a surface of genus~$g$, such that $\partial C_i =\partial C_i'$ and $\partial D_i = \partial D_i'$ are a dual pair of curves on the surface for each $i$. Then symmetric surgery on $\{C_i,D_i\}_{i=1}^g$ is homotopic to surgery on the $\{C_i\}$, which is homotopic to symmetric surgery on $\{C_i,D_i'\}_{i=1}^g$. This is homotopic to asymmetric surgery on the caps $\{D_i'\}$, which finally is homotopic to the result of symmetric surgery on $\{C_i',D_i'\}_{i=1}^g$, as asserted.
\end{proof}

\section{Constructing capped gropes and towers with dual spheres}\label{sec:technical-lemma}

Our main technical lemma, given below, shows how to upgrade a collection of immersed discs with certain types of dual capped surfaces to a collection of gropes whose attaching regions coincide with the framed boundary of the original discs, as well as the same dual capped surfaces. The requirement on the dual capped surfaces is that the bodies be located close to the boundary of the ambient manifold. This property enables us to find Whitney discs which do not intersect the bodies, at various steps of the argument. The extra power of the dual capped surfaces, compared to dual spheres, is that we can use them to produce dual spheres multiple times in succession.

\begin{lemma}\label{lem:grope-lemma}
Consider a generically immersed collection of discs in a 4-manifold~$M$
\[
D=(D_1,\dots,D_k) \colon (D^2\sqcup \cdots \sqcup D^2,S^1 \sqcup \cdots \sqcup S^1) \looparrowright (M,\partial M).
\]
Suppose that $\{T^c_i\}$ is a dual collection of capped surfaces for the $\{D_i\}$ such that
\begin{equation}\label{lambda-mu-zero}
\lambda(C_\ell, C_m) =\mu(C_\ell)=0
\end{equation}
for every pair of caps $C_\ell$ and $C_m$ of $\{T_i^c\}$. Assume in addition that the body $T_i$ of each $T_i^c$ is contained in a collar neighbourhood of~$\partial M$, and that otherwise only boundary collars of each of the~$D_i$ and the $C_\ell$ lie in the collar neighbourhood.

Then there exists a collection of mutually disjoint capped gropes $\{G_i^c\}$, properly embedded in $M$, with arbitrarily many surface stages and pairwise disjoint caps with algebraically cancelling double points, such that parallel copies of the $\{T^c_i\}$, after a regular homotopy of the caps, provides a dual collection of capped surfaces for the $\{G_i^c\}$. Moreover, for each~$i$, the attaching region of $G_i^c$ coincides with the framed boundary of $D_i$.
\end{lemma}

\begin{proof}
Parameterise the collar neighbourhood of $\partial M$ as $\partial M \times [0,1] \subseteq M$, where $\partial M = \partial M \times \{0\}$. Assume that initially the bodies $\{T_i\}$ of the dual capped surfaces lie in $\partial M \times (5/6,1)$. We will take parallel copies of the $T_i^c$ throughout the proof of this lemma, and also in the proof of \cref{lem:grope-to-tower} below, such that the bodies lie progressively closer to $\partial M \times \{0\}$, in regions of the form $\partial M \times ((6-n)/6,(7-n)/6)$, $n =1,\dots,6$, and caps are obtained as parallel copies of the original caps extended at the boundary collar.

Let $\{C_\ell\}$ denote the caps of the $\{T_i^c\}$. Tube all the intersections and self-intersections among the $\{D_i\}$ into parallel copies of the dual surfaces $\{T^c_i\}$, with bodies $\{T_i\}$ contained in $\partial M \times (5/6,1)$.  This produces a mutually disjoint collection of capped surfaces $\{D'_i\}$ with the same framed boundaries as $\{D_i\}$. Let $\{C'_n\}$ denote the caps for~$\{D'_i\}$.

Now we will separate the collections $\{C_\ell\}$ and $\{C'_n\}$. For each $i$, take a parallel copy of $T_i$ in $\partial M \times (4/6,5/6)$, along with its caps, and contract to obtain a sphere $S_i$. Push nothing off the contraction. The collection $\{S_i\}$ is geometrically dual to $\{D'_i\}$. By construction and \eqref{lambda-mu-zero}, all the intersections between $\{C_\ell\}$ and $\{C'_n\}$ can be paired by framed Whitney discs $\{W_k\}$.
Since $\partial W_k$ lies in $M \sm (\partial M \times [0,4/6))$, we may and shall assume that $\{W_k\}$ does not intersect~$\partial M \times [0,4/6)$;
this follows since each constituent Whitney disc is obtained as a track of a null homotopy,
and because the inclusion map~$M \sm (\partial M \times [0,4/6)) \hookrightarrow M$ is a homotopy equivalence.  From now on we consider parallel copies of $\{T_i^c\}$ with each of the bodies $T_i$ contained in $\partial M \times [0,4/6)$. We continue to denote these parallel copies by $T_i^c$.

By tubing the  $\{W_k\}$ into the $\{S_i\}$ we can also arrange that $\{W_k\}$ does not intersect~$\{D_i'\}$.
Now the geometric Casson lemma (\cref{lem:geometric-casson-lemma}) applied with the Whitney discs $\{W_k\}$ ensures that there are no intersections between the collections $\{C_\ell\}$ and $\{C'_n\}$.
The fact that $W_k \cap D_j' = \emptyset = W_k \cap T_i$, for all $i,j,k$, implies that no unwanted cap--body intersections are created whenever $T_i^c$ is a parallel copy whose body lies in $\partial M \times [0,4/6)$.

Push the intersections and self-intersections of the caps $\{C'_n\}$ down to $\{D'_i\}$ and tube into parallel copies of $\{T_i^c\}$, with bodies in $\partial M \times (3/6,4/6)$, to produce height two capped gropes $\{D_i''\}$. Let $\{C''_p\}$ denote the caps of $\{D''_i\}$. Next we separate the collections $\{C''_p\}$ and $\{C_\ell\}$, as we did above. The procedure is the same: contract parallel copies of $\{T_j^c\}$ with bodies in $\partial M \times (2/6, 3/6)$, pushing nothing off the contraction, to obtain a collection $\{S'_i\}$ of framed spheres geometrically dual to $\{f''_i\}$; find framed Whitney discs for the intersections between $\{C''_p\}$ and $\{C_\ell\}$ which have interiors disjoint from $\partial M \times [0,2/6)$, as well as from $\{D''_i\}$ by tubing into $\{S'_i\}$. Then apply the geometric Casson lemma.  Here, since the boundaries of the Whitney discs lie on $\bigcup \{C_\ell\} \cup \{C''_p\} \subseteq M \sm (\partial M \times [0,2/6))$, we may assume that the Whitney discs created by choosing a null homotopy miss $\partial M \times [0,2/6)$, and hence miss the body $T_i$ of any parallel copy of $T_i^c$ whose body lies in $\partial M \times [0,2/6)$. From now on we will only consider such parallel copies of $T_i^c$.

Apply grope height raising~\cite{FQ}*{Proposition~2.7} (see also~\cite{DET-book-gropeheightraising}*{Proposition~17.1 and Lemma~17.7})
to the capped gropes $\{D_i''\}$, to produce a collection of capped gropes $\{G_i^c\}$ with the same framed attaching region as the $\{D_i''\}$, arbitrarily many surface stages, and pairwise disjoint caps with algebraically cancelling double points. Additionally, parallel copies of the surfaces $\{T_i^c\}$ with bodies in $\partial M \times [0,2/6)$ are geometrically dual to $\{G_i^c\}$. Moreover, the first two stages of each $G_i^c$ coincide with the first two stages of $D_i''^c$, for each $i$.  This completes the proof.
\end{proof}

Next, we show that a collection of capped gropes with certain types of dual capped surfaces, such as those produced by the previous lemma, can be replaced by a collection of $1$-storey capped towers, with the same framed attaching region as the original capped gropes, and with geometrically dual spheres. The following lemma is the only point in this paper that uses the hypothesis that the fundamental group of the ambient manifold  be good. Recall that, roughly speaking, a $1$-storey capped tower can be built from a capped grope, all of whose cap intersections are self-intersections, by adding a second layer of caps to the double point loops of the caps of the grope i.e.\ to the tip regions on the disc stage of the capped grope.

\begin{lemma}\label{lem:grope-to-tower}
Let $M$ be a connected $4$-manifold with $\pi_1(M)$ good. Let $n$ be a non-negative integer. Let $\{G_i^c\}$ be a collection of capped gropes with height $n+2.5$ and mutually disjoint caps, properly embedded in $M$, with a geometrically dual collection of capped surfaces $\{T_i^c\}$, such that the body $T_i$ of each $T_i^c$ is contained in a collar neighbourhood $\partial M \times [0,2/6)$ of~$\partial M$. Suppose that otherwise only boundary collars of the attaching regions of the $G_i^c$ and the caps of the $T_i^c$ lie in $\partial M \times [0,2/6)$.

Then there exists a collection of $1$-storey capped towers $\{\TT_i^c\}$, properly embedded in $M$, where the first storey grope has height $n$, with a geometrically dual collection of  spheres $\{R_i\}$, such that~$\TT_i^c$ and $G_i^c$ have the same attaching region for each $i$. Moreover, the first $n$ surface stages of $G_i^c$ and $\TT_i^c$ coincide and each~$R_i$ is obtained from $T_i^c$ by contraction.
\end{lemma}

\begin{remark}
At first glance \cref{lem:grope-to-tower} seems remarkably close to~\cite{FQ}*{Proposition 3.3}, which claims to begin with a collection of properly immersed discs in a $4$-manifold $M$ equipped with a collection of $\pi_1$-null geometrically transverse capped surfaces, and replace this with $1$-storey capped towers with arbitrary grope height, the same framed attaching region, and equipped with geometrically dual spheres. However, the proof of~\cite{FQ}*{Proposition 3.3} does not construct the claimed geometrically transverse spheres. We also note that the definition of geometrically transverse capped surfaces in~\cite{FQ} allows intersections with the caps, which is different from our notion of dual capped surfaces in \cref{defn:dual-cappped-gropes-surfaces}.
\end{remark}

\begin{proof}
Consider the union of the top $1.5$ stages of the gropes $\{G_i^c\}$. Since $\pi_1(M)$ is good (\cref{def:good-group}) and the caps are mutually disjoint, each component contains an immersed disc whose double point loops are null-homotopic in~$M$, and whose framed boundary coincides with the attaching region of the old top $1.5$ stages. Attach these discs to the lower stages, producing capped gropes $\{\widetilde{G}_i^c\}$ of height~$n+1$. Contract the top stage to obtain capped gropes of height $n$, whose double point loops are still null-homotopic in $M$, with caps still mutually disjoint, and such that the caps have algebraically cancelling self-intersection points, since they arose from a symmetric contraction. Here we are using the fact that the new double point loops are parallel push-offs of the previous double point loops.

Null homotopies for the double point loops produce immersed discs $\{\widetilde{\delta}_\alpha\}$ bounded by the double point loops of the new caps. These new discs may be assumed to miss $\{T_i\}$, since any null homotopy may be pushed off the collar $\partial M \times [0,2/6)$, and since the boundaries of these discs, the double point loops, are disjoint from $\partial M \times [0,2/6)$ because of the hypothesis that $\widetilde{G}_i^c \cap (\partial M \times [0,2/6))$ consists of a boundary collar of the attaching region of $\widetilde{G}_i^c$, for each~$i$.
 On the other hand the discs coming from the null homotopies might intersect $\{\widetilde{G}_i^c\}$ arbitrarily.

Contract parallel copies of $\{T_i^c\}$, with bodies in $\partial M \times (1/6,2/6)$, to produce a family of spheres~$\{S''_i\}$ geometrically dual to~$\{G_i^c\}$. Boundary twist the discs $\{\widetilde{\delta}_\alpha\}$ to achieve the correct framing and then push down and tube into~$\{S''_i\}$ to remove any intersections of the resulting discs with~$\{\widetilde{G}_i^c\}$. Glue the resulting discs~$\{\delta_\alpha\}$ to~$\{\widetilde{G}_i^c\}$ to produce the $1$-storey capped towers $\{\TT_i^c\}$.

Now consider parallel copies of the $\{T_i^c\}$ with bodies in $\partial M \times (0,1/6)$.
At this point the caps of~$\{T_i^c\}$ only (possibly) intersect $\{\TT_i^c\}$ in the tower caps, and the body $T_i$ is geometrically dual to~$\TT_i^c$. Contract each~$T_i^c$ along its caps and call this family of spheres~$\{R_i\}$. Push all intersections with tower caps off the contraction.
The family of dual spheres $\{R_i\}$ is then geometrically dual to the resulting $1$-storey capped towers $\{\mathcal{T}_i^c\}$, as desired.
\end{proof}

We need one more lemma, that we shall use to control the homotopy classes of the geometrically dual spheres in the output of the disc embedding theorem.

\begin{lemma}\label{lem:null-homotopic}
Let $N$ be a 4-manifold and let
  \[D = (D_1,\dots,D_m) \colon (D^2 \sqcup \cdots \sqcup D^2,S^1 \sqcup \cdots \sqcup S^1) \to (N,\partial N)\]
be a generic immersion of a collection of discs that admits a geometrically dual, generically immersed collection $\{E_i\}_{i=1}^m$ of framed spheres in $N$.
Let $T^c \subseteq N \sm \bigcup_{i=1}^m \nu D_i$ be a capped surface constructed by taking a Clifford torus corresponding to an intersection point between $D_i$ and $D_j$, where $i=j$ is permitted, and tubing meridional caps obtained from meridional discs for $D_i$ and $D_j$ into parallel copies of $E_i$ and $E_j$.  Let $S \colon S^2 \to N \sm \bigcup_{i=1}^m \nu D_i$ be the 2-sphere obtained by contracting~$T^c$.  Then $\iota_*[S]=0 \in \pi_2(N)$, where $\iota \colon N \sm \bigcup_{i=1}^m \nu D_i \to N$ is the inclusion map.
\end{lemma}

\begin{proof}
By \cref{lem:surgery-indep-caps}, and as explained in \cite{FQ}*{Section~2.3}, the homotopy class of a 2-sphere obtained by contracting a torus along caps is independent of the choice of caps, provided the boundaries of the different choices of caps coincide.  Therefore in $N$ we may replace the caps constructed from $E_i$ and $E_j$ by the meridional caps.  The sphere $S'$  resulting from contraction along the meridional caps is contained in a $D^4$ neighbourhood in $N$ of the intersection point giving rise to the Clifford torus. So $S'$ is null-homotopic in $N$. It follows that $\iota \circ S$ is null-homotopic. 
\end{proof}

\section{Proof of the disc embedding theorem}\label{sec:proofs}

In this section we prove \cref{thm:FQ51A-intro}, which we recall below.

\begin{theorem-named}[\cref{thm:FQ51A-intro}\,\,\normalfont{(\cite{FQ}*{Theorem~5.1A})}]
Let $M$ be a connected $4$-manifold with good fundamental group. Consider a continuous map
\[
F=(f_1,\dots,f_k) \colon (D^2\sqcup \cdots \sqcup D^2,S^1 \sqcup \cdots \sqcup S^1) \longrightarrow (M, \partial M)
\]
that is a locally flat embedding on the boundary and that admits algebraically dual spheres $\{g_i\}_{i=1}^k$ satisfying $\lambda(g_i,g_j)=0= \wt{\mu}(g_i)$ for all $i,j$.
Then there exists a locally flat embedding
\[
\ol{F}=(\ol{f}_1,\dots,\ol{f}_k) \colon (D^2\sqcup \cdots \sqcup D^2,S^1 \sqcup \cdots \sqcup S^1) \hookrightarrow (M,\partial M)
\]
such that $\ol{F}$ has the same boundary as $F$ and admits a generically immersed, geometrically dual collection of framed spheres~$\{\ol g_i\}_{i=1}^k$, such that $\ol{g}_i$ is homotopic to $g_i$ for each~$i$.

Moreover, if $f_i$ is a generic immersion, then it induces a framing of the normal bundle of its boundary circle. The embedding $\ol{f}_i$ may be assumed to induce the same framing.
\end{theorem-named}

\begin{proof}
By \cref{thm:gen-immersions-intro}, we may assume that the collections $\{f_i\}$ and $\{g_i\}$ are generically immersed and transverse. By adding local cusps if necessary, assume that $\mu(g_i)=0$ for each $i$. Tube each intersection and self-intersection within $\{f_i\}$ into $\{g_i\}$ using the unpaired intersection point (after having chosen a pairing of all but one of the $f_i$-$g_i$ intersection points by Whitney discs, which is possible since $\lambda(f_i,g_i)=1$). We then obtain a collection of discs, which we still call $\{f_i\}$, where $\lambda(f_i,f_j) = \mu(f_i)=0$ and $\lambda(f_i,g_j)=\delta_{ij}$ for each $i,j$. Note also that the framing on the boundary of each $f_i$ is unchanged since all $g_j$ are framed. Apply the geometric Casson lemma (\cref{lem:geometric-casson-lemma}) to arrange that $\{f_i\}$ and $\{g_i\}$ are geometrically dual, after a regular homotopy.

Since $\lambda(f_i, f_j) = \mu(f_i)=0$ for all $i,j$, the intersections and self-intersections among the $\{f_i\}$ can be paired up with framed Whitney discs $\{D_j\}$. By tubing into the geometric duals $\{g_i\}$ we can assume that the interiors of $\{D_j\}$ lie in the complement of the $\{f_i\}$.

Let $M':= M \setminus \bigcup \nu(f_i)$. We will apply \cref{lem:grope-lemma,lem:grope-to-tower} in $M'$, so we check the hypotheses. The collection $\{f_i\}$ is $\pi_1$-negligible in $M$ so $\pi_1(M')\cong \pi_1(M)$ is good. Let $T_j$ be the Clifford torus at one of the double points paired by $D_j$. Cap each $T_j$ with meridional discs to $\{f_i\}$, then tube the unique intersection point of each cap with $\{f_i\}$ into parallel copies of the dual spheres~$\{g_i\}$. The resulting capped surfaces $\{T_j^c\}$ lie in $M'$ as desired. The bodies lie in a collar neighbourhood of $\partial M'$, and are are geometrically dual to $\{D_j\}$, while the caps have algebraically cancelling intersections. Contract a parallel copy of each $T_j^c$ to produce a dual sphere $S_j$ for $D_j$ in $M'$, and push the collection $\{D_j\}$ off the contraction. Remove the intersections between the caps of $\{T_j^c\}$ and $\{D_j\}$ by tubing into parallel copies of $\{S_j\}$. The resulting caps for $\{T_j^c\}$ no longer intersect $\{D_j\}$.

Now apply \cref{lem:grope-lemma,lem:grope-to-tower} to $M'$, to replace the discs $\{D_j\}$ with $1$-storey capped towers~$\{\TT_j^c\}$ whose framed attaching regions coincide with the framed boundary of $\{D_j\}$ and that have geometrically dual spheres $\{R_j\}$.
Remove intersections of $\{g_i\}$ with~$\{\TT_j^c\}$, by pushing down into the base surface and tubing into~$\{R_j\}$. The resulting spheres $\{\overline{g}_i\}$ are disjoint from $\{\TT_j^c\}$ and geometrically dual to $\{f_i\}$. By~\cite{FQ}*{Chapters~3 and 4} (see also~\citelist{\cite{DET-book-towerheightraising}*{Chapter~17}\cite{Freedman-notes}*{Part~IV}}) every disc-like $1$-storey capped tower with at least four surface stages contains a locally flat embedded disc whose framed boundary coincides with the attaching region of the capped tower. Applied to the collection $\{\TT_j^c\}$, this produces mutually disjoint, embedded and framed Whitney discs pairing the intersections and self-intersections of the $\{f_i\}$ away from $\{\overline{g}_i\}$. Whitney moves guided by these discs produce embedded discs $\{\overline{f}_i\}$ geometrically dual to $\{\overline{g}_i\}$. In the case that $f_i$ was initially generically immersed, we obtain the same framing on the boundary of $\overline{f}_i$ since these are regular homotopies in the interior.

It remains only to argue that each $\overline{g}_i$ is homotopic in $M$ to $g_i$ for each~$i$. Observe that to obtain~$\overline{g}_i$, we have:
\begin{enumerate}[label=(\roman*)]
   \item  homotoped $g_i$ by an application of the geometric Casson lemma, and then
   \item  tubed into parallel copies of the dual spheres $\{R_j\}$ for the towers $\{\TT_j^c\}$.
 \end{enumerate}
However, $R_j$ was obtained by contracting $T^c_j$, whose body is a Clifford torus for an intersection point among the $\{f_i\}$.
Therefore by \cref{lem:null-homotopic}, $R_j$ is null-homotopic in $M$, i.e.\ $[R_j]=0 \in \pi_2(M)$.  (Note that $R_j$ is nontrivial in $\pi_2(M')$.)
It follows that $\overline{g}_i$ is homotopic in $M$ to $g_i$, as desired. This completes the proof.
\end{proof}

\section{Generic immersions in smooth and topological 4-manifolds}\label{sec:generic-immersions}\label{sec:topological}

The goal of this section is to prove \cref{thm:gen-immersions-intro}, rectifying another omission in \cite{FQ}.

\begin{remark}\label{rem:logical-dependencies}
During the proof we will use results that rely on the smooth-input disc embedding theorem. Therefore the results of this section rely on a version of \cref{thm:FQ51A-intro} where $M$ is assumed to be a smooth 4-manifold.  \cref{prop:hom-gen-immersion} was used in the proof of \cref{thm:FQ51A-intro} with a purely topological input, in order to find generic immersions.  The logical dependencies in the development of topological 4-manifold topology are elucidated in \cite{DET-book-flowchart}.
\end{remark}

Recall that for a compact surface $\Sigma$ and $4$-manifold $M$, a {\em generic immersion} in the smooth category, written $f\colon \Sigma^2\imra M^4$, is a smooth map which is an embedding, except for a finite number of transverse double points in the interior. This means that $f$ is an embedding on $\partial \Sigma$ and there are coordinates on $\Sigma$ and on $M$ such that restricted to the interior, $f$ coincides in local coordinates in~$M$  with either the inclusion $\R^2  \times  \{0\}\subset \R^4$ or a transverse double point $\R^2 \times \{0\} \cup \{0\} \times \R^2 \subset \R^4$.

In a smooth $4$-manifold $M$, the set of generic immersions is open and dense in the Whitney topology on $C^\infty(\Sigma,M)$. To see this, apply \cite{Hirsch-Diff-Top}*{Theorem~2.2.12} which shows that immersions are dense in the space of smooth maps, and then~\cite{GoGu}*{Chapter~III, Corollary~3.3} shows that generic immersions are dense in the space of immersions. These are exactly the stable maps in the smooth mapping space~\cite{GoGu}*{Chapter~III, Theorem 3.11}, where a map $f$ is said to be \emph{stable} if it has a neighbourhood in  $C^\infty(\Sigma,M)$ such that every map in the neighbourhood can be obtained by pre-composing with a diffeomorphism of $\Sigma$ and post-composing with a diffeomorphism of $M$, with  both diffeomorphisms  isotopic to the identity.

It is well known that every continuous map between smooth manifolds is arbitrarily close to a smooth map (see for example \cite{Lee-smooth}*{Theorem~10.21} or \cite{Hirsch-Diff-Top}*{Theorem~2.2.6}). Since moreover the collection of generic immersions is open and dense in $C^\infty(\Sigma,M)$, a smooth map can be further perturbed to a (smooth) generic immersion. Since the perturbations may be chosen to be small, a proper continuous map may be perturbed to a proper smooth generic immersion (see~\cite{Spring-proper}*{Lemma~1}).

A continuous map between topological manifolds is called an \emph{immersion} if it is locally an embedding.
A continuous map of a surface to a topological $4$-manifold with the same local behaviour as a smooth generic immersion will be called a \emph{generic immersion} in the topological category. Note that this implies that the map is a \emph{locally flat} embedding near points with a single inverse image.

A smooth homotopy $H$ between smooth maps $\Sigma\rightarrow M$ is said to be generic if the corresponding map $\Sigma\times [0,1]\rightarrow M\times[0,1]$ is a generic smooth map. Whitney's classification of singularities \cites{Whitney-singularities, Whitney-sing-II} of generic maps from 3-manifolds to 5-manifolds implies that the singularities of the track of a  generic homotopy $H$ as above consist of finger moves, Whitney moves and cusps. These arise at finitely many times $t\in I$,  when $H_t\colon \Sigma\to M$ is not a generic immersion but either:
\begin{enumerate}[label=(\roman*)]
\item $H_t$ has a tangency, increasing or decreasing the double point set by a pair with opposite signs, corresponding to a finger/Whitney move, or
\item the rank of the derivative of $H_t$ drops at a single point, creating a cusp where one double point appears or disappears.
 \end{enumerate}
A topological generic homotopy is defined to be a concatenation of finger moves, Whitney moves, and cusps.

A continuous homotopy between smooth maps $\Sigma\to M$ may be perturbed (rel.\ boundary) to produce a smooth generic homotopy. Since the perturbation may be chosen to be small, a proper homotopy may be perturbed to a proper smooth generic homotopy.

Our goal in this section is to state and prove purely topological analogues of these smooth facts. Here is the main technical result.

\begin{proposition}\label{prop:hom-gen-immersion}
Let $\Sigma$ be a compact surface and let $M$ be a $4$-manifold.
\begin{enumerate}
 \item\label{item:hom-to-gen-imm-1}
 Every continuous map $f\colon (\Sigma,\partial\Sigma) \to (M,\partial M)$ is homotopic to a generic immersion, smooth in $M \sm \{q\}$ for some point $q \in M$.
\item\label{item:hom-to-gen-imm-2} Every continuous homotopy $H \colon (\Sigma,\partial\Sigma) \times [0,1] \to (M,\partial M)$ that restricts to a smooth generic immersion on $\Sigma \times \{i\}$ for each $i=0,1$, with respect to some smooth structure on $M$ minus a point $q$, is homotopic rel.\ $\Sigma \times \{0,1\}$  to a generic homotopy, that is smooth in some smooth structure on $M\sm \{r\}$ for some point $r \in M$.
\end{enumerate}
\end{proposition}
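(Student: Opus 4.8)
The plan is to reduce both statements to the smooth case via a now-standard trick: any topological $4$-manifold admits a smooth structure away from a single point (this is one of Quinn's results, and for our purposes it is cited in the discussion preceding the proposition, since $M\sm\{q\}$ is an open connected $4$-manifold and hence smoothable), and any continuous map can be approximated by a smooth one on any compact subset of a smooth manifold. The subtlety is that $\Sigma$ is compact but $f(\Sigma)$ need not miss any prescribed point of $M$, so we cannot simply delete a point of $M$ and work there; we must first push $f$ off a point.

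For part~\eqref{item:hom-to-gen-imm-1}, first I would homotope $f$ so that its image misses some point $q\in M$. This is possible because $\dim\Sigma = 2 < 4 = \dim M$: approximate $f$ (rel.\ boundary, which already lies in $M$ and can be arranged to miss $q$ since $\partial M$ is $3$-dimensional) by a map transverse to a point $q$ in the interior of $M$, so that $f^{-1}(q) = \emptyset$. Now $M\sm\{q\}$ carries a smooth structure, and $\Sigma$ is a compact smooth surface, so by the Whitney approximation theorem $f$ is homotopic rel.\ $\partial\Sigma$ to a smooth map $g\colon \Sigma\to M\sm\{q\}$; by the openness and density of generic immersions in the Whitney topology (cited above from \cite{GoGu}) and the fact that small perturbations preserve the homotopy class, $g$ is homotopic to a smooth generic immersion. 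Composing with the inclusion $M\sm\{q\}\hookrightarrow M$ gives the desired generic immersion in $M$, smooth away from $q$. One must check that the approximation can be taken to fix $\partial\Sigma$, where $f$ is not assumed embedded a priori in part~\eqref{item:hom-to-gen-imm-1}; this is handled by the relative version of Whitney approximation, using a collar of $\partial\Sigma$.

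For part~\eqref{item:hom-to-gen-imm-2}, the input is $H\colon \Sigma\times[0,1]\to M$ which is already a smooth generic immersion at the two ends, with respect to a smooth structure on $M\sm\{q\}$. I would first homotope $H$ rel.\ $\Sigma\times\{0,1\}$ so that its image misses a point $r\in M$: this again uses that $\dim(\Sigma\times[0,1]) = 3 < 4$, together with the observation that the two end maps already miss the point $q$ (so choose $r=q$, or a point near it not in the images of the ends), so the homotopy pushing $H$ off $r$ can be taken constant on $\Sigma\times\{0,1\}$. Then $H$ is a continuous map into the smooth manifold $M\sm\{r\}$ which is already smooth (indeed a generic immersion) on the closed subset $\Sigma\times\{0,1\}$; by the relative Whitney approximation theorem it is homotopic rel.\ $\Sigma\times\{0,1\}$ to a smooth map, and then by the relative version of the density of generic smooth homotopies (generic maps $\Sigma\times[0,1]\to (M\sm\{r\})\times[0,1]$ are dense, and one may keep a map that is already generic on a neighbourhood of the boundary unchanged there) it is homotopic rel.\ $\Sigma\times\{0,1\}$ to a generic homotopy, whose track singularities are finger moves, Whitney moves, and cusps by Whitney's classification of singularities of maps $3\to 5$, as recalled above.

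The main obstacle I expect is the bookkeeping around the single non-smooth point: one has to be careful that the point $q$ obtained in part~\eqref{item:hom-to-gen-imm-1} (where $f$ fails to be smooth) can be made to agree with, or be absorbed into, the point $r$ needed in part~\eqref{item:hom-to-gen-imm-2}, and that when we later concatenate generic homotopies (as in the proof of Theorem~\ref{thm:gen-immersions-intro}) the finitely many exceptional points can be collected into one. A secondary technical point is the relative nature of all the approximations near $\partial\Sigma$ and near $\Sigma\times\{0,1\}$: since the maps there are already as nice as we want, every approximation and perturbation must be chosen supported away from those loci, which is exactly what the relative forms of Whitney approximation and of Thom transversality provide. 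None of this is deep, but it is where the care lies.
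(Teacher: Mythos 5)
Your overall strategy (smooth $M$ away from a point, then invoke density of generic immersions and of generic homotopies) matches the paper's, but the step you yourself flag as ``the subtlety'' --- pushing the continuous map off a point of $M$ --- is exactly where your argument breaks down, and it is the main content of the paper's proof. You justify $f^{-1}(q)=\emptyset$ by ``approximating $f$ by a map transverse to a point'' together with the dimension count $2<4$. Neither applies: $M$ carries no smooth structure yet, so there is no transversality theorem to invoke (the topological transversality of Theorem~\ref{thm:transversality} concerns locally flat submanifolds, not continuous maps), and a merely continuous map from a compact surface can be surjective onto $M$ (space-filling maps), so no dimension count forces it to miss a point. The same objection applies verbatim to your first step in part~\eqref{item:hom-to-gen-imm-2}. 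The paper resolves the circularity (one wants to smooth $f$ in order to push it off a point, but one must first delete a point in order to smooth $M$) by deleting an arbitrary point $p$, smoothing $M\sm\{p\}$, restricting $f$ to the open surface $\Sigma_p=\Sigma\sm f^{-1}(p)$, which maps \emph{properly} to $M\sm\{p\}$, properly homotoping this restriction to a smooth proper map, and only then applying Sard's theorem (the image of a smooth map from a $2$-manifold to a $4$-manifold has measure zero) to find a missed point $q$; the ends of $\Sigma_p$ are then sent back to $p$ to reassemble a map of all of $\Sigma$ into $M\sm\{q\}$. Without some version of this proper-map-plus-Sard argument your proof does not get off the ground.

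A secondary gap in part~\eqref{item:hom-to-gen-imm-2}: the point $r$ produced by any such argument need not equal $q$, and you cannot simply ``choose $r=q$'', since $H$ may hit $q$ in its interior. The ends $H|_{\Sigma\times\{0,1\}}$ are generic immersions with respect to a smooth structure on $M\sm\{q\}$, whereas the generic homotopy must live in a smooth structure on $M\sm\{r\}$; one must construct the latter so that it agrees with the former near $H(\Sigma\times\{0,1\})$, by restricting the original structure to a neighbourhood of the images of the ends and extending it over $M\sm\{r\}$. This matching of smooth structures is what makes your rel-$\Sigma\times\{0,1\}$ approximation legitimate, and it is carried out explicitly in the paper but absent from your proposal.
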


\begin{proof}
First we prove~\eqref{item:hom-to-gen-imm-1}, following~\cite{FQ}*{Corollary~9.5C}.
We may assume without loss of generality that $M$ is connected. Choose a smooth structure on $\Sigma$. The complement of a point~$p$ in the interior of $M$ is smoothable relative to any fixed chosen smoothing of the boundary of~$M$~\citelist{\cite{FQ}*{Theorem~8.2}\cite{Quinn-annulus}\cite{Quinn:isotopy}}. Choose a smooth structure on $M\setminus \{p\}$.

If the point $p$ can be chosen disjoint from the image $f(\Sigma)$ in \eqref{item:hom-to-gen-imm-1}, and disjoint from $H(\Sigma \times [0,1]$ in \eqref{item:hom-to-gen-imm-2}, then we have a continuous map, and the result follows from smooth approximation and general position. The aim of the rest of the proof is therefore to arrange that $f$ (respectively $H$ can be arranged to have image missing a point. The proof given in \cite{FQ}*{Corollary~9.5C} assumes that this holds without comment.

Homotope the restriction of $f$ to $\partial \Sigma$ to a smooth embedding, and extend this to a homotopy of $\Sigma$ supported in a collar near the boundary (or use a given embedding to start with and work rel.\ boundary). Consider the smooth surface $\Sigma_p := \Sigma \smallsetminus f^{-1}(p)$, which comes with a proper map~$f|\colon \Sigma_p\to M \smallsetminus \{p\}$ that is properly homotopic to a smooth proper map $f'$. By Sard's theorem, some point $q$ in the interior of $M\smallsetminus \{p\}$ does not lie in the image of~$f'$. Moreover, as $\Sigma$ is compact and the homotopy was proper, $f'$ maps each end of $\Sigma_p$  to $p$ (as  did~$f$). Send all of $f^{-1}(p)$ to $p$,  to extend $f'$ to a continuous map $\Sigma \to M \smallsetminus \{q\}$, homotopic in~$M$ to the original map~$f$. The latter homotopy is produced from the proper homotopy between $f|$ and $f'|$ by mapping every end to $p$.

Now as explained above, the map $f'$ is homotopic to a smooth generic immersion $f''\colon  \Sigma \imra M \smallsetminus \{q\}$.  Add $q$ back in and forget the smooth structure to yield a topological generic immersion, noting that as required $f''$ is smooth in some structure on $M\sm \{q\}$ by construction.

Now, to prove~\eqref{item:hom-to-gen-imm-2}, consider a homotopy $H\colon \Sigma \times [0,1] \to M$ whose restriction $H| \colon \Sigma \times \{i\} \to M$ is a smooth generic immersion in some smooth structure on $M \sm \{q\}$ for some $q\in M$, for each $i=0,1$. We follow a similar strategy as the proof of \eqref{item:hom-to-gen-imm-1}. Use a smoothing of $M$ away from $q$,  and consider
\[(\Sigma\times [0,1])_q := (\Sigma \times [0,1]) \sm H^{-1}(\{q\}) \subseteq M \smallsetminus \{q\}.\]
The proper map $(\Sigma\times [0,1])_q \to M \sm \{q\}$ is properly homotopic rel.\ $\Sigma \times \{0,1\}$ to a proper smooth map $H' \colon (\Sigma \times [0,1])_q \to M\sm \{q\}$, that by Sard's theorem misses at least one point $r \in M$.  Since~$\Sigma \times [0,1]$ is compact and the homotopy was proper, $H'$ maps each end of $H^{-1}(q)$ to $q$.   Extend $H'$ to a continuous map $H'' \colon \Sigma \times [0,1] \to M \sm \{r\}$.  Choose a smooth structure on $M \sm \{r\}$, such that $H| = H''| \colon \Sigma \times \{0,1\} \to M$ is a smooth generic immersion.   To achieve this, start with the original smooth structure on $M \sm \{q\}$ restricted to a neighbourhood of $H''(\Sigma \times \{0,1\}) \subseteq M \sm \{q,r\}$, and extend that structure to a smooth structure on $M \sm \{r\}$.

Now homotope $H''|_{\Sigma \times [0,1]}$ rel.\ $\Sigma \times \{0,1\}$ to a smooth generic homotopy in $M \sm \{r\}$.
Add $r$ back in and forget the smooth structure to yield a topological generic homotopy, noting that as required~$H''$ is smooth in some smooth structure on $M\sm \{r\}$.
\end{proof}

\begin{remark}\label{remark:normal-bundles}
Let us discuss some consequences of \cref{prop:hom-gen-immersion} for normal bundles of generic immersions.
A generic immersion $f\colon  \Sigma \imra M$ has a linear normal bundle $\nu(f)\to\Sigma$, in both the smooth and topological categories \cite{FQ}*{Section~9.3}.  However \cref{prop:hom-gen-immersion} tells us that~$f$ is generically homotopic to a smooth immersion in some smooth structure on $M\sm \{q\}$, so this gives an easier proof that up to homotopy $f$ has a linear normal bundle.
Then $f$ comes with a map $\nu(f) \to M$ of the total space into $M$, which is an embedding away from a finite number of plumbings near the double points of~$f$. In the case that $\Sigma$ has nonempty boundary, assume that $f^{-1}(\partial M) = \partial \Sigma$ and that we are given a fixed framing of the normal bundle restricted to the boundary. We call $f$ {\em framed} if $\nu(f)$ comes with a trivialisation and {\em twisted} if the (relative) Stiefel-Whitney class $w_2(f) := w_2(\nu(f)) \in H^2(\Sigma,\partial \Sigma;\Z/2)$ is nonzero. If $\Sigma$ is oriented, $f|_{\partial\Sigma}$ is framed and $f$ is not twisted, then one can add local cusps to $f$ (corresponding to a non-regular homotopy between generic immersions) until the relative Euler number in $H^2(\Sigma,\partial \Sigma;\Z)$ of $\nu(f)$ vanishes and hence $\nu(f)$ becomes trivial and the framing on~$f$ induces the given framing on $\partial \Sigma$.
\end{remark}

Now we prove \cref{thm:gen-immersions-intro} about homotopy classes $[\Sigma,M]$ of maps $f\colon \Sigma\to M$, when $\Sigma$ is a union of spheres or discs.
We write $\{\Sigma,M\}_{\partial}$ for the subspace of all continuous maps that restrict on $\partial\Sigma$ to locally flat embeddings disjoint from the image of the interior of $\Sigma$, and $[\Sigma,M]_{\partial}$ for the set of homotopy classes of such maps.  In this theorem we do not assume that $f^{-1}(\partial M) = \partial \Sigma$. Choose a local orientation of $M$ at the basepoint and assume that $\Sigma$ comes with a whisker to the basepoint. The proof will use topological transversality, which we state first.

\begin{theorem}[\cites{Quinn-annulus, Quinn-TT} (see also \cite{FQ}*{Section~9.5}]\label{thm:transversality}
Let $\Sigma_1$ and $\Sigma_2$ be locally flat proper submanifolds of a topological $4$-manifold $M$ that are transverse to $\partial M$.  There is an isotopy of $M$, supported in any given neighbourhood of $\Sigma_1 \cap \Sigma_2$, taking $\Sigma_1$ to a submanifold $\Sigma_1'$ transverse to $\Sigma_2$.
\end{theorem}

\begin{reptheorem}{thm:gen-immersions-intro}
Let $\Sigma$ be a disjoint union of discs or spheres, and let $M$ be a $4$-manifold.
  The subspace of generic immersions in the space of all continuous maps
leads to a bijection
\[
\frac{F=\{(f_1,\dots,f_m)\colon \Sigma= \Sigma_1  \sqcup \cdots \sqcup \Sigma_m  \imra M \mid \mu(f_i)_1 = 0,\, i=1,\dots,m\}} { \{\text{isotopies, finger moves, Whitney moves}\} } \longleftrightarrow [\Sigma,M]_{\partial}, \]
where $\mu(f_i)_1 \in\Z$ denotes the signed sum of double points of $f_i$ whose double point loops are trivial in $\pi_1(M)$, and $[\Sigma,M]_{\partial}$ denotes the set of homotopy classes of continuous maps that restrict on~$\partial\Sigma$ to locally flat embeddings disjoint from the image of the interior of $\Sigma$.
Moreover, any such homotopy between generic immersions is homotopic rel.\ $\Sigma \times \{0,1\}$ to a sequence of isotopies, finger moves and Whitney moves.
\end{reptheorem}

\cref{theorem:generic-immersions-bijection} could be rephrased more succinctly, but perhaps less transparently, as the statement that the inclusion of the space of generic immersions into $\{\Sigma,M\}_{\partial}$ is a $1$-connected map.

\begin{proof}
The map is well-defined since any isotopy, finger move, or Whitney move is a homotopy. Note that for each $i$, $\mu(f_i)_1$ can be changed arbitrarily by (non-regular) cusp homotopies.
Therefore by \cref{prop:hom-gen-immersion}\eqref{item:hom-to-gen-imm-1} the map is surjective.

For injectivity, consider generic immersions $F, F'\colon \Sigma\looparrowright M$, which restrict to embeddings on $\partial \Sigma$ that miss the image of the interior, and assume that~$F$ and~$F'$ are homotopic. Using topological transversality, we can assume that $F$ and $F'$ intersect transversely after performing an isotopy. Choose a point $q\in M$ disjoint from the image of both $F$ and $F'$. Since $F$ and $F'$ are generic immersions intersecting transversely, there is a neighbourhood, namely the union of the images of the normal bundles, within which $F$ and $F'$ are smooth generic immersions. Extend this smooth structure over $M\smallsetminus \{q\}$.

Now by \cref{prop:hom-gen-immersion}\eqref{item:hom-to-gen-imm-2}, we can replace the homotopy from $F$ to $F'$ by a smooth generic homotopy, by performing a homotopy rel.\ $\Sigma \times \{0,1\}$.  Then we saw earlier that
the singularities of the track of this homotopy consists of finger moves, Whitney moves and cusps.
If $\mu(f_i)_1 = \mu(f'_i)_1$ for every~$i$ then the cusps arising in $H$ can be cancelled in pairs~\cite{FQ}*{p.~23}, leading to a regular homotopy,  which as desired is a sequence of isotopies, finger moves, and Whitney moves.
\end{proof}

\bibliographystyle{halpha-abbrv}
\def\MR#1{}
\bibliography{bib}
\end{document}